\numberwithin{equation}{section}
\theoremstyle{definition}
\newtheorem{definition}{Definition}[section]
\newtheorem{remark}[definition]{Remark}
\theoremstyle{plain}
\newtheorem{theorem}[definition]{Theorem}
\newtheorem{lemma}[definition]{Lemma}
\newtheorem{result}[definition]{Result}
\newcommand{\eps}{\varepsilon}
\newcommand{\zt}{\zeta}
\newcommand{\al}{\alpha}
\newcommand{\lm}{\lambda}
\newcommand\ba[1]{\overline{#1}}
\newcommand\hull[1]{\widehat{#1}}
\newcommand\wtil[1]{\widetilde{#1}}
\newcommand{\id}{\mathbb{I}}
\newcommand{\sm}{\sigma}
\newcommand{\scr}[1]{\mathscr{#1}}
\newcommand{\bdy}{\partial}
\newcommand{\OM}{\Omega}
\newcommand{\ball}{\mathbb{B}}
\newcommand{\poly}{\mathscr{P}}
\newcommand{\impl}{\Longrightarrow}
\newcommand{\tends}{\rightarrow}
\newcommand{\CC}{\mathbb{C}^2}
\newcommand{\cplx}{\mathbb{C}}
\newcommand{\RR}{\mathbb{R}^2}
\newcommand{\rea}{\mathbb{R}}
\begin{document}

\title[Polynomial convexity of union of totally-real planes]{On the polynomial convexity
of the union of more\\ than two totally-real planes in $\CC$}
\author{Sushil Gorai}
\address{Department of Mathematics, Indian Institute of Science, Bangalore -- 560 012}
\email{sushil.gorai@gmail.com}
\keywords{Polynomial convexity; totally real}
\subjclass[2000]{Primary: 32E20, 46J10}

\begin{abstract}
 In this paper we shall discuss local polynomial convexity at the origin of
the union of finitely many totally-real planes through $0 \in\cplx^2$. The planes, say $P_0,
\dots, P_N$, satisfy a mild transversality condition that enables us to view them
in Weinstock normal form, i.e. $P_0=\RR$ and $P_j=M(A_j):=(A_j+i\id)\RR$,
$j=1,\dots,N$, where each $A_j$ is a $2\times 2$ matrix with real entries. Weinstock has solved
the problem completely for $N=1$ (in fact, for pairs of transverse, maximally totally-real subspaces 
in $\cplx^n\, \forall n\geq 2$). Using a characterization of simultaneous
triangularizability of $2\times 2$ matrices over the reals, given by Florentino, we deduce a sufficient
condition for local polynomial convexity of the union of the above planes at $0\in \CC$. Weinstock's 
theorem for $\CC$ occurs as a special case of our result.
The picture is much clearer when $N=2$. For three totally-real planes, we shall provide 
an open condition for local polynomial convexity of the union. We shall also argue the optimality 
(in an appropriate  sense) of the conditions in this case.
\end{abstract}

\maketitle

\section{Introduction and statement of results}\label{S:intro}
Let $K$ be a compact subset of $\cplx^n$. The polynomially convex hull of
$K$ is defined by $\hull{K}:= \{z\in \cplx^n : |p(z)|\leq \sup_{K}|p|,\, p\in \cplx [z_1, \dots, z_n]
 \}$. $K$ is said to be polynomially convex if $\hull{K}=K$. We say that a closed subset $E$ of 
$\cplx^n$ is locally polynomially convex at $p\in E$ if $E\cap \ba{\ball(p;r)}$ is polynomially 
convex for some $r>0$ (here, $\ball(p;r)$ denotes the open ball in $\cplx^n$ with centre $p$ and radius $r$). 
In general, it is very difficult to determine whether a given 
compact subset of $\cplx^n$, $n>1$, is polynomially convex. Therefore, researchers have usually 
restricted their attention to specific subclasses of geometric objects. 
In this paper we consider the union of finitely many totally-real planes in $\CC$ intersecting at 
$0\in \CC$, with a {\em mild} transversality condition. In this setting we shall discuss the following: 
\begin{itemize}
 \item  A sufficient condition for the union to be locally polynomially convex at $0\in \CC$ that 
generalizes a theorem (Result~\ref{R:weinstock} below) by Weinstock \cite{Wk} in $\CC$.
\smallskip

\item  An {\em open} condition that is sufficient for the union of totally-real planes 
to be locally polynomially convex at $0\in \CC$ when the number of planes is three.
\smallskip

\item {\em Optimality}, in an appropriate sense defined below, of the above open condition 
for three totally-real planes.
\end{itemize}
We shall see a couple of motivations for focusing attention on the above setting. However, 
let us first make a brief survey of known results in this direction and make the above setting 
a bit more formal. 
\smallskip

It is easy to show that if $M$ is a 
totally-real subspace of $\cplx^n$, then any compact subset of $M$
is polynomially convex. Hence, let us now consider $P_0\cup P_1$, where $P_0$ and $P_1$ are 
two transverse totally-real $n$-dimensional subspaces of $\cplx^n$.
Applying a $\cplx$-linear change of coordinate, we can assume that $P_0= \rea^n$. A careful look
at the second subspace under the same change of coordinate gives us $P_1 = (A+i \id)\rea^n$ for some
$A \in \rea^{n \times n}$ (see \cite{Wk} for details). We shall call this form for the pair of totally-real 
subspaces as {\em Weinstock's normal form}. Weinstock \cite{Wk} found a way of giving complete 
characterization for the polynomial convexity of $P_0\cup P_1$ at $0\in \cplx^n$  in terms of 
the eigenvalues of $A$ (see Result~\ref{R:weinstock} in Section~\ref{S:technical}).
\smallskip

 No analogue of Weinstock's theorem is known for more than two totally-real subspaces. Even in $\CC$, the problem
of generalizing Weinstock's characterization does not seem any simpler. The works of Pascal Thomas 
\cite{T1,T2} give us some sense of the difficulties involved. In \cite{T1} Thomas 
gave an example of a one-parameter family of triples $(P_0^{\eps},P_1^{\eps},P_2^{\eps})$ 
of totally-real planes in $\CC$, intersecting 
at $0\in \CC$, showing that polynomial convexity of each pairwise union at the origin does not 
imply the polynomial convexity of the union at the origin (see Results~\ref{R:PTnpcvx} ). In fact, 
he showed that for the above triples $(P_0^{\eps},P_1^{\eps},P_2^{\eps})$, the polynomial hull of 
$(\cup_{j=1}^3P_j^{\eps})\cap \ba{\ball(0;r)}$ contains an open set in $\CC$. The explicit expression, 
given by Thomas, of this one-parameter family of triples will be used to show optimality, i.e., to  
prove the last assertion of Theorem~\ref{T:3planes}. 
On the other hand, Thomas also found in \cite{T2} examples of triples whose union 
is locally polynomially convex at the origin.
\smallskip

In this paper, we will be far less interested in specific examples of polynomial convexity (or the 
failure thereof) of a finite union of totally-real planes passing through $0\in \CC$.  It turns out 
that many of Weinstock's ideas in \cite{Wk} are the ``correct" ones to follow when one considers 
the union of more than two totally-real subspaces containing $0\in \CC$. One of the motivations 
of this paper is to demonstrate the utility of those ideas. Here, we shall focus closely on how 
the algebraic properties of Weinstock's normal form of a
collection of totally-real planes in $\CC$ influence polynomial convexity. This suggests that 
there is a notion of a Weinstock-type normal form for the union of more than two totally-real 
planes containing $0\in \CC$. Consider a finite collection of maximally totally-real subspaces
$P_0, P_1, \dots, P_N$ of $\cplx^n$, satisfying $P_0\cap P_j=\{0\}$, $j=1,\dots,N$. By exactly the same 
arguments as in \cite{Wk}, we can find a $\cplx$-linear change of coordinate relative to which:  
\begin{align}
  P_0 &: \rea^n \notag\\
 P_j &: M(A_j)=(A_j+i \id)\rea^n, \quad j=1, \dots, N, \label{E:Wnormalform}
\end{align}
where $A_j\in \rea^{n\times n}$, $j=1,\dots, N$. (Note that in this paper {\em we shall refer to 
a $\cplx$-linear operator and its matrix representation relative to the standard basis of $\cplx^n$ 
interchangably}.) We shall call \eqref{E:Wnormalform} {\em Weinstock's normal form for} 
$\{P_0,P_1, \dots, P_N\}$. Note that the collection $\{P_0,P_1, \dots, P_N\}$ above {\em need 
not be pairwise transverse at the origin}. 
\smallskip

When $n=2$, one quickly intuits that Weinstock's methods would work if the $A_j$'s can be 
simultaneously conjugated over $\rea$ to certain cannonical forms. Hence if $\{A_1, \dots, A_N\}$ 
is pairwise commutative then conclusions analogous to Weinstock's can be demonstrated. 
This idea is the basis of our first theorem --- except that the ``commutation condition'' 
above can be weakened.
Note also, that:
 \begin{itemize}
\item We recover Weinstock's theorem for $\CC$ when we take $N=1$ below.
\item We do {\em not} require the planes $P_0, \dots, P_N$ to be pairwise transverse at $0 \in \CC$.
 \end{itemize}
\smallskip

In order to state our first theorem we need the following definition.
\begin{definition}\label{D:reduced}
 A matrix sequence $\mathscr{A}=\{A_1,A_2, \dots, A_N\}$, $A_j \in \rea^{n \times n}$, is
said to be {\em reduced} if there are no commuting pairs among its terms, i.e. $A_jA_k-A_kA_j \neq 0$
for all $1 \leq j<k \leq N$.
 A subsequence $\mathscr{B} \subset \mathscr{A}$ is called a {\em reduction} of $\mathscr{A}$ 
if $\mathscr{B}$ is reduced
and is obtained from $\mathscr{A}$ by deleting some of its terms.
 The {\em reduced length} of $\mathscr{A}$ is the greatest $k \in \mathbb{Z}_+$
such that there exists a reduction $\mathscr{B}\subseteq \mathscr{A}$ of cardinality $k$.
\end{definition}

\begin{theorem}\footnote{This theorem is one of the results of the author's doctoral thesis.} 
\label{T:totrlNplane}
 Let  $P_0,\dots, P_N $ be distinct totally-real planes in $\CC$ containing the origin. Assume
\begin{itemize}
\item[(1)] $P_0 \cap P_j=\{(0,0)\}$ for all $j=1,2,\dots ,N$.
\end{itemize}
Hence, let Weinstock's normal form for $\{P_0,\dots, P_N\}$ be
\begin{align}
 P_0 &: \RR \notag\\
 P_j &: M(A_j)=(A_j+i \id)\RR, \quad j=1,\dots ,N, \notag
\end{align}
where $A_j \in \rea^{2\times 2}$.
\smallskip
Let $L$ be the reduced length of $\{A_1, \dots, A_N\}$. Assume further that for some maximal
reduced subset $\mathscr{B}\subset \{A_1, \dots ,A_N\}$,
\begin{itemize}
\item[(2)] $det[A_j,A_k]=0, \; j\neq k,\ 1\leq j,k \leq N$, and, additionally, $Tr(ABC-CBA)=0 \; \forall
A,B,C \in \mathscr{B}$ if $L=3$.
\end{itemize}
Under these conditions:
\begin{itemize}
\item[(a)]
If each $A_j$ has only real eigenvalues, then $\cup^N_{j=0}P_j$ is polynomially
convex at the origin.
\item[(b)]
If there exists a $j,\ 1\leq j \leq N$, such that $A_j$ has non-real eigenvalues,
then the spectrum of $A_k$ is of the form $\{\lm_k, \ba{\lm_k}\} \; \forall k=1, \dots, N$.
Write $\lm_k=s_k+ it_k$, $V_k=(s^2_k+t^2_k-1, 2s_k), \; k=1,\dots, N$, and $V_0=(1,0)$.
Then, $\cup^N_{j=0}P_j$ is locally polynomially convex at the origin if and only if there
exists no pair $(l,m), \;l\neq m, 0 \leq l, m \leq N$, satisfying $V_l=cV_m$ for some constant $c>0$.
\end{itemize}
\end{theorem}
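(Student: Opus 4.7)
The plan is to use Florentino's characterization of simultaneous triangularizability to place the family $\{A_1,\dots,A_N\}$ in a common normal form, then establish local polynomial convexity of $\bigcup_j P_j$ in that form by extending Weinstock's techniques (case~(a)) or by a direct fibration argument (case~(b)). Florentino's criterion for a family of real $2\times 2$ matrices requires $\det[A_j,A_k]=0$ for all pairs and $\mathrm{Tr}(A_iA_jA_k-A_kA_jA_i)=0$ for all triples. The pair condition is hypothesis~(2). For the triple condition, by cyclic invariance of trace it is automatic whenever any two of $\{A_i,A_j,A_k\}$ commute. By maximality of $\mathscr{B}$, any $A_\ell\notin\mathscr{B}$ commutes with some element of $\mathscr{B}$, so every triple not entirely contained in $\mathscr{B}$ has a commuting pair. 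The only possibly nontrivial triples lie in $\mathscr{B}$; these occur exactly when $|\mathscr{B}|\geq 3$, i.e.\ $L=3$, which is precisely the case in which the hypothesis provides the trace condition. Florentino's theorem therefore applies to the full family and yields a common invariant line in $\CC$.

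In case~(a) every $A_j$ has real spectrum, so the common invariant line is real and a real change of basis $T\in GL_2(\rea)$ puts each $A_j$ into upper-triangular real form $\bigl(\begin{smallmatrix}\alpha_j & \beta_j \\ 0 & \gamma_j\end{smallmatrix}\bigr)$ while preserving $P_0=\RR$. I would then extend Weinstock's polynomial-convexity argument for a single pair $P_0\cup P_1$ with $A_1$ upper-triangular and real-spectrum to the whole family: the simultaneous upper-triangularization provides a common invariant flag $\{0\}\subset\rea e_1\subset\RR$ that allows Weinstock's explicit polynomials and approximation estimates to be built uniformly across all the $P_j$'s, yielding local polynomial convexity at the origin.

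In case~(b) some $A_j$ has a non-real eigenvalue, so the common invariant line cannot be real and is spanned by some $v\in\CC\setminus\RR$; then $\{v,\bar v\}$ simultaneously diagonalizes every $A_k$ as $\mathrm{diag}(\lambda_k,\bar\lambda_k)$, yielding the first assertion of~(b). After the $\cplx$-linear change of coordinates to the basis $\{v,\bar v\}$, a direct computation gives $P_0=\{u_2=\bar u_1\}$ and $P_j=\{u_2=c_j\bar u_1\}$, where
\[
c_j=\frac{(s_j^2+t_j^2-1)+2is_j}{s_j^2+(t_j+1)^2}
\]
(and $c_0=1$ matches $V_0=(1,0)$). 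Thus $V_k$ is the direction of $c_k\in\cplx\cong\RR$, and the hypothesis ``no $V_l=cV_m$ with $c>0$'' is equivalent to the rays $R_j:=c_j[0,\infty)\subset\cplx$ being pairwise distinct. For sufficiency I would use the polynomial $q(u_1,u_2):=u_1u_2$: on $P_j$ it takes the value $c_j|u_1|^2\in R_j$, so $q\bigl(\bigcup_jP_j\cap\ba{\ball(0;r)}\bigr)$ is a finite star of segments along pairwise distinct rays --- polynomially convex in $\cplx$ --- while every nonzero fibre is a circle lying in a single totally real plane $P_j$ (polynomially convex, since compact subsets of a totally real submanifold are), and the zero-fibre is $\{0\}$. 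A standard fibration principle (a consequence of Kallin's lemma) then forces local polynomial convexity of the union. For necessity, if $V_l=cV_m$ with $c>0$ and $l\neq m$, then $R_l=R_m$ and $P_l\cup P_m$ violates Weinstock's criterion for $N=1$ (Result~\ref{R:weinstock}), so the full union cannot be locally polynomially convex at $0$.

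The principal obstacle is the polynomial-convexity argument in case~(a): after simultaneous upper-triangularization the various planes still have enough freedom that a naive fibration by a linear projection can produce fibres whose chords enclose a bounded region (thereby failing polynomial convexity in the slice). This forces one to adapt Weinstock's original, more delicate construction --- rather than a Kallin-type slicing --- and the real-eigenvalue hypothesis together with the triangular structure are precisely what make this adaptation go through. The clean $\cplx$-diagonalization available in case~(b), by contrast, reduces the question to a straightforward fibration with manifestly polynomially convex image and fibres, which is why the assertion there is considerably sharper (an ``iff'').
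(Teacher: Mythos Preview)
Your part~(b) is essentially the paper's proof in different coordinates, with one caveat: you invoke Florentino's theorem to produce a common invariant line in $\CC$, but Result~\ref{R:florentino} concerns triangularization over $\rea$ and its hypotheses include that each $A_k$ be individually triangularizable over $\rea$ --- which fails exactly when some $A_k$ has non-real spectrum. The paper instead uses the elementary Lemma~\ref{L:rcommutator10}: when one $A_j$ has non-real eigenvalues and all pairwise commutator determinants vanish, a single $S\in GL(2,\rea)$ conjugates every $A_k$ to $\bigl(\begin{smallmatrix}s_k & -t_k\\ t_k & s_k\end{smallmatrix}\bigr)$. Your complex diagonalization is then the further conjugation of this real normal form by $\bigl(\begin{smallmatrix}1 & i\\ i & 1\end{smallmatrix}\bigr)$, under which the paper's separating polynomial $F(z,w)=z^2+w^2$ becomes exactly $4u_1u_2=4q$. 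Both arguments then invoke Kallin's lemma with image a finite union of rays through $0$, and both reduce necessity to Weinstock's criterion for a single pair; the paper carries out that reduction explicitly via $B=(A_lA_m+\id)(A_l-A_m)^{-1}$, showing that $V_l=cV_m$ with $c>0$ forces $\sigma(B)=\{it,-it\}$ with $|t|>1$.

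For part~(a) the approaches diverge. The paper does \emph{not} adapt Weinstock's approximation construction; it applies Stolzenberg's fibration lemma (Lemma~\ref{L:S1}) to the linear projection $F(z,w)=w$. After simultaneous real upper-triangularization via Florentino, $F(K_j)$ lies on the line $(\nu_j+i)\rea\subset\cplx$, $F(K)$ therefore has empty interior and connected complement, and the paper asserts that for $\zeta\neq 0$ the fibre $F^{-1}\{\zeta\}\cap K$ is a single line segment in $\cplx_z\times\{\zeta\}$. So the ``naive fibration by a linear projection'' you dismiss is exactly the paper's method. Your proposed alternative --- building ``Weinstock's explicit polynomials and approximation estimates\ldots uniformly across all the $P_j$'s'' --- is not a proof as written: you name neither the polynomials nor the estimates, nor explain how the off-diagonal entries $a_j$ are to be handled jointly across several planes. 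That said, your instinct that the $w$-fibres can be delicate is not groundless: the paper's single-segment claim tacitly uses that the lines $(\nu_j+i)\rea$ meet only at the origin, i.e.\ that the bottom-right entries $\nu_j$ of the triangularized $A_j$ are pairwise distinct, which the hypotheses do not guarantee; when several coincide, the nonzero fibre is a finite union of affine segments in $\cplx_z$ whose complement need not be connected. You should engage with this point directly rather than defer to an unspecified approximation scheme.
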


\noindent Our proof of Theorem~\ref{T:totrlNplane} is strongly influenced by the methods in 
Weinstock's 
paper~\cite{Wk} --- our result is already stated in terms of Weinstock's normal form. However, 
in order to use these techniques (presented in Section~\ref{S:technical}), 
we would like the matrices $A_j, \; j=1,\dots, N$, in \eqref{E:Wnormalform} be as 
simple in structure as possible. Thanks to Lemma~\ref{L:invertible}
 it suffices to work with the planes $P_0, \wtil{P_1}, 
\dots, \wtil{P_N}$, where 
\begin{align}
 \wtil{P_j}\; :\; M(B_j) \, = & \ (B_j+ i\id)\RR, \;\, j=1, \dots, N, \notag \\
 B_j \, \sim & \ A_j \; \text{such that each $B_j$ is sufficiently sparse/structured.} \notag
\end{align}
 It turns out that in the difficult half of Theorem~\ref{T:totrlNplane}, the matrices 
$B_j, \quad j=1, \dots, N$, just need to be upper-triangular. So, in order to exploit Lemma~\ref{L:invertible}:
\begin{itemize}
 \item We require that $\{A_1, \dots, A_N\}$ be {\em 
simultaneously} 
triangularizable. 
\item We also require the common conjugating matrix to be a matrix with {\em real} entries.
\end{itemize}
What is required, hence, is a sufficient condition for simultaneous triangularizability of 
{\em real} $2\times 2$ matrices by a single conjugator with real entries.
This requirement is met by a result of Florentino (see Result~\ref{R:florentino}) and that is where 
Condition $(2)$ comes from. We acknowledge that, under Condition ($2$), the collection 
$\{P_0,P_1,\dots,P_N\}$ is non-generic in the space of $(N+1)$-tuples of totally-real 2-subspaces of 
$\CC$. However, even under this closed condition, Theorem~\ref{T:totrlNplane} has some utility. There 
is a close connection between the polynomial convexity at $0\in \CC$ of a union of $N$ totally-real 
2-subspaces of $\CC$ and the polynomial convexity of the graphs of homogeneous polynomials (in $x$ 
and $y$, $z=x+iy$) of degree $N$, $N\geq 2$. This connection was first investigated by 
Forstneric-Stout \cite{FS} and Thomas \cite{T1} (also see Section~\ref{S:technical} below). Bharali's 
results in \cite{GB1} require the study of the union {\em of more than} two totally-real 2-subspaces, 
and a careful reading reveals that an attempt to weaken his hypothesis is obstructed by a {\em 
non-generic} family of $N$-tuples, $N\geq 3$, of totally-real 2-subspaces in $\CC$. It is hoped 
that Theorem~\ref{T:totrlNplane} will provide some insight into this problem.  
\smallskip

The natural question one may ask is: {\em what happens when Condition $2$ fails?} We investigate 
the situation when the number of totally-real planes is restricted to three. The complexity of the 
set-up with three totally-real planes is low enough that we can replace Condition $(2)$ by 
an open condition. Our next theorem uses essentially same idea as in Theorem~\ref{T:totrlNplane}:

\begin{theorem}\label{T:3planesevrl}
 Let $P_0,P_1, P_2$ be three totally-real planes containing $0\in \CC$.
Assume $P_0 \cap P_j=\{(0,0)\}$ for $j=1,2$.
Hence, let Weinstock's normal form for $\{P_0,P_1, P_2\}$ be
\begin{align}
 P_0 &: \RR \notag\\
 P_j &: M(A_j)=(A_j+i \id)\RR, \quad j=1,2, \notag
\end{align}
where $A_j \in \rea^{2\times 2}$.
Let the pairwise unions of $P_0,P_1,P_2$ be locally
polynomially convex at $0\in \CC$.  
 Then $P_0\cup P_1\cup P_2$ is locally polynomially convex at $0\in \CC$ if any one of the following conditions holds:
\begin{itemize}
 \item[$(i)$]$det[A_1,A_2]>0$ and $detA_j>0, \; j=1,2$,
\smallskip

\item[$(ii)$]$det[A_1,A_2]<0$ and $detA_j<0, \; j=1,2$.
\end{itemize}

\end{theorem}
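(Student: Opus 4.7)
My plan is to follow the same overall template as in the proof of Theorem~\ref{T:totrlNplane}: use Lemma~\ref{L:invertible} to conjugate $(A_1,A_2)$ simultaneously by a real invertible matrix into a manageable canonical form, and then invoke the Weinstock-type polynomial-approximation machinery of Section~\ref{S:technical}. The new obstruction is that, since $\det[A_1,A_2]\neq 0$, Florentino's criterion (Result~\ref{R:florentino}) rules out simultaneous triangularization over $\rea$, so the clean upper-triangular reduction used for Theorem~\ref{T:totrlNplane} is unavailable. The signed hypotheses in $(i)$ and $(ii)$ must be used to select a coarser real canonical form in its place.

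The first concrete step is to normalize the commutator. Since $[A_1,A_2]$ is traceless, its determinant fully determines its $GL_2(\rea)$-conjugacy class up to scale: under $(i)$, $\det[A_1,A_2]>0$ forces purely imaginary spectrum, so $[A_1,A_2]$ is $GL_2(\rea)$-conjugate to $\mu J$ with $J=\begin{pmatrix}0&-1\\1&0\end{pmatrix}$ and $\mu\neq 0$; under $(ii)$, $\det[A_1,A_2]<0$ forces real eigenvalues of opposite signs, hence conjugacy to a traceless diagonal matrix. After conjugating $(A_1,A_2)$ by a real $P$ that realizes this normal form for the commutator, the remaining freedom is the real points of the centralizer of the normalized commutator, together with overall rescaling of the coordinate. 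I would use this residual freedom to further reduce $A_1$ to one of a short list of explicit shapes compatible with $\det A_1$ having the prescribed sign: under $(i)$, either a same-sign real diagonal or a scalar-plus-rotation; under $(ii)$, an opposite-sign real diagonal. Because the commutator is now fixed in canonical form, $A_2$ is then pinned down to a small, continuously parametrized family.

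With the pair $(A_1,A_2)$ in this explicit form and Weinstock's eigenvalue conditions for each of the three pairwise unions already supplied by hypothesis, the next step is to run the estimates of Section~\ref{S:technical} (the same estimates that power the $N=2$ case of Theorem~\ref{T:totrlNplane}) on each pair, and to show that they can be patched along the normalized third plane to produce a single polynomial family separating $P_0\cup P_1\cup P_2$ near $0\in\CC$. Lemma~\ref{L:invertible} then transports the conclusion back to the original triple.

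The main obstacle, as I see it, lies precisely in this patching. The pairwise Weinstock hypothesis gives separation polynomials pair-by-pair, but without simultaneous triangularization one cannot mechanically assemble them into a single family that controls all three planes at once. What conditions $(i)$ and $(ii)$ really encode is not the mere nonvanishing of the three determinants $\det[A_1,A_2]$ and $\det A_j$, but the fact that they share a common \emph{sign} --- a consistent orientation of the three planes. Converting that orientation into a quantitative peak-function or weight-function estimate, tied to the canonical form of the commutator produced above, is where I expect the real work of the proof to sit.
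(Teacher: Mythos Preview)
Your proposal identifies a reasonable starting point but leaves the decisive step unresolved, and in fact the difficulty you flag (``patching'' three pairwise separation polynomials) is an artifact of the strategy rather than a genuine obstacle.

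The paper's proof does not attempt any three-way patching. Instead it separates $K_0:=P_0\cap\ba{\ball(0;1)}$ from $K:=(P_1\cup P_2)\cap\ba{\ball(0;1)}$ in a single stroke via Kallin's lemma (Lemma~\ref{L:kallin}), using the explicit polynomial $F(z,w)=z^2+w^2$ in case~$(i)$ and $F(z,w)=z^2-w^2$ in case~$(ii)$. The pairwise hypothesis is invoked only once, to certify that $K$ itself is polynomially convex; after that, one just needs $F(K_0)\subset\rea$ and $F(K)\subset(\cplx\setminus\rea)\cup\{0\}$. The latter holds because $\imag F|_{P_j}$ is a real quadratic form in $(x,y)$ whose definiteness is exactly the condition $\det A_j>0$ (resp.\ $<0$), once the pair is put in the right canonical form.

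That canonical form is also obtained differently from what you propose. Rather than normalizing the commutator first, the paper normalizes $A_1$ first. The sign hypothesis on $\det[A_1,A_2]$ already forces $A_1$ (and $A_2$) to have \emph{real, distinct} eigenvalues: if $A_1$ had nonreal spectrum, Lemma~\ref{L:norform3planes} would put it in rotation form, and then $\det[A_1,A_2]\le 0$; a repeated eigenvalue similarly yields $\det[A_1,A_2]\le 0$. So one diagonalizes $A_1$ over $\rea$, and then (again via Lemma~\ref{L:norform3planes}) the sign of $\det[A_1,A_2]$ selects between the two possible shapes for $TA_2T^{-1}$: symmetric off-diagonal in case~$(i)$, skew off-diagonal in case~$(ii)$. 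In case~$(i)$ this feeds directly into Theorem~\ref{T:3planes}, since $\det(\scr{A}_j+\scr{A}_j^{\sf T})=4\det A_j>0$; case~$(ii)$ is handled by the direct Kallin argument with $F=z^2-w^2$.

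In short: your commutator-first normalization is not wrong, but it leads you to anticipate a hard gluing problem that the paper avoids entirely. The missing idea is that the sign conditions on $\det A_j$ are precisely what make $\imag(z^2\pm w^2)$ definite on $P_1$ and $P_2$ simultaneously, so a single Kallin polynomial suffices.
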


\noindent It turns out that the first part of Theorem~\ref{T:3planesevrl}, i.e., {\em the case when Condition $(i)$ holds}, 
is a special case of our third 
result. However, we choose to present it separately because of the simplicity of its hypotheses, 
and because this hypothesis provides the motivation for the much more 
technical-looking hypothesis of Theorem~\ref{T:3planes} below.
\smallskip
 
It is easy to see that if $(P_0,P_1,P_2)$ is a triple of totally-real planes with $P_0\cap P_j=\{0\},\; j=1,2$ 
(with one of the three being designated as $P_0$ in case all three planes are mutually transverse), then the matrices 
$A_1$ and $A_2$ associated to Weinstock's normal form for the triple $(P_0,P_1,P_2)$ is unique. In short, 
every triple $(P_0,P_1,P_2)$ of totally-real planes with 
$P_0\cap P_j=\{0\}$, $j=1,2,$ is parametrized by a pair of matrices.
Let us define 
\[
 \OM:= \left\lbrace(A_1,A_2)\in (\rea^{2\times 2})^2 ~:~ \sm(A_1)\subset \cplx \setminus \rea \; 
\text{or}\; (\sm(A_1) \subset \rea \; \text{and}\; \#\sm(A_1)=2) \right\rbrace.
\]
It is clear that $(\rea^{2\times 2})^2 \setminus \OM$ has Lebesgue measure zero. 
(In contrast, it turns out -- see Section~\ref{Proof:3planesevrl} -- that the hypotheses of 
Theorem~\ref{T:3planesevrl} rule out the possibility of $\sm(A_j)\subset \cplx \setminus \rea$, 
$j=1,2$.) In the 
following theorem we will study the triples of totally-real planes parametrized by $\OM$. 

\begin{theorem}\label{T:3planes}
  Let $P_0,P_1, P_2$ be three totally-real planes containing $0\in \CC$.
Assume $P_0 \cap P_j=\{(0,0)\}$ for $j=1,2$.
Hence, let Weinstock's normal form for $\{P_0,P_1, P_2\}$ be
\begin{align}
 P_0 &: \RR \notag\\
 P_j &: M(A_j)=(A_j+i \id)\RR, \quad j=1,2, \notag
\end{align}
and assume $(A_1,A_2)$ belongs to parameter domain $\OM$. 
By definition, $\exists T\in GL(2,\rea)$ such that

\begin{equation}\label{E:normformev}
TA_1T^{-1} = \begin{pmatrix}
              \lm_1 & 0 \\
              	0 & \lm_2
              	\end{pmatrix}
 \; \text{or}\; 
\begin{pmatrix}
s & -t \\
t & s
\end{pmatrix},
\; \lm_1\neq \lm_2 \in \rea, s \in \rea, t\in \rea\setminus \{0\}.
\end{equation}

\noindent Let $\mathscr{A}_1:= TA_1T^{-1}$. Assume further that the pairwise unions of $P_0,P_1,P_2$ be 
locally polynomially convex at $0\in \CC$.
\begin{itemize}
\item[$(i)$]Suppose $\sm(A_1)\subset \rea$ and $\# \sm(A_1)=2$. Then either $det[A_1,A_2]=0$ or 
$\exists T\in GL(2,\rea)$ that satisfies \eqref{E:normformev} and such that $TA_2T^{-1}$ has the form 
\[
 TA_2T^{-1}=\begin{pmatrix}
             s_{12} & t_2 \\
              t_2 & s_{22}
            \end{pmatrix}
\; \text{or}\; 
\begin{pmatrix}
 s_{12} & -t_2 \\
t_2 & s_{22}
\end{pmatrix}
=:\scr{A}_2(T), \; s_{21},s_{22},t_2 \in \rea.
\]
In the first case $P_0\cup P_1\cup P_2$ is locally polynomially convex at $0\in \cplx^2$. In the 
second case $P_0\cup P_1\cup P_2$ is locally polynomially convex if for some $T\in GL(2,\rea)$ as 
stated, $det(\scr{A}_j(T)+\scr{A}_j(T)^{\sf T})>0$, for $j=1,2$.
\medskip

\item[$(ii)$] Suppose $\sm(A_1)\subset \cplx \setminus \rea$. Then $\exists T\in GL(2,\rea)$ that satisfies 
\eqref{E:normformev} and such that 
\[
 TA_2T^{-1}= \begin{pmatrix}
              s_{12} & -t_2\\
               t_2 & s_{22}
             \end{pmatrix}
=: \scr{A}_2(T),\; s_{12},s_{22},t_2 \in \rea.
\]
If $det(\mathscr{A}_j(T)+\mathscr{A}_j(T)^{\sf T})>0$,
$j=1,2$, for some $T\in GL(2,\rea)$ as just stated, then $P_0\cup P_1\cup P_2$ is locally 
polynomially convex at $0\in \CC$.
\end{itemize}
\smallskip

The above conditions are optimal in the sense that, writing 
$\OM^*\varsubsetneq \OM$ to be set of pairs $(A_1,A_2)\in \OM$ that satisfy the conditions in $(i)$
or $(ii)$, there is a one-parameter family of triples $(P_0^{\eps},P_1^{\eps},P_2^{\eps})$  
 parametrized by $(A_1^{\eps}, A_2^{\eps}) \in \OM\setminus \OM^*$ such that
\begin{itemize}
 \item  pairwise unions of $P_0^\eps, P_1^\eps, P_2^\eps$ are locally polynomially convex at the origin; 
\item  the union of the above planes is not locally polynomially convex at $0\in \CC$; and
\item  $(A_1^\eps,A_2^\eps)\tends \bdy \OM^*$ (considered as a subset of $\OM$) as $\eps \searrow 0.$
\end{itemize}
\end{theorem}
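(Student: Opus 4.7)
The plan is to prove Theorem~\ref{T:3planes} in three stages: a normal-form reduction of the pair $(A_1,A_2)$; the polynomial convexity conclusions in each subcase via the Weinstock-type machinery of Section~\ref{S:technical}; and finally the optimality assertion through Thomas's one-parameter family.

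For the normal-form reduction I would invoke Lemma~\ref{L:invertible}, which permits replacing $(A_1,A_2)$ by $(TA_1T^{-1},TA_2T^{-1})$ for any $T\in GL(2,\rea)$ without affecting local polynomial convexity at $0$. First choose $T$ so that $\mathscr{A}_1:=TA_1T^{-1}$ is in one of the two forms of \eqref{E:normformev}. In case~$(i)$, writing $TA_2T^{-1}=(a_{jk})$, a direct computation gives
\[
[\mathscr{A}_1,TA_2T^{-1}]=(\lm_1-\lm_2)\begin{pmatrix}0 & a_{12}\\ -a_{21} & 0\end{pmatrix},
\]
with determinant $(\lm_1-\lm_2)^2 a_{12}a_{21}$. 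So the dichotomy $\det[A_1,A_2]=0$ versus $\neq 0$ is exactly $a_{12}a_{21}=0$ versus $\neq 0$. In the first alternative $\{\mathscr{A}_1,TA_2T^{-1}\}$ is already simultaneously triangular over $\rea$, so Theorem~\ref{T:totrlNplane} applies directly. In the second, the residual diagonal conjugations $D=\mathrm{diag}(\al,1)$ preserve $\mathscr{A}_1$ and send $(a_{12},a_{21})$ to $(\al a_{12},a_{21}/\al)$; choosing $\al^2=|a_{21}/a_{12}|$ equalizes the two moduli to a common $t_2>0$, the relative sign being determined by $\mathrm{sign}(a_{12}a_{21})$. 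This yields the symmetric form $\scr{A}_2(T)$ when $\det[A_1,A_2]>0$ and the antisymmetric form when $\det[A_1,A_2]<0$. In case~$(ii)$, $\mathscr{A}_1=sI+tJ$ with $J=\bigl(\begin{smallmatrix}0&-1\\1&0\end{smallmatrix}\bigr)$, so the planar rotations $R(\tht)=\cos\tht\cdot I+\sin\tht\cdot J$ commute with $\mathscr{A}_1$; the sum of the off-diagonal entries of $R(\tht)\, TA_2T^{-1}\, R(\tht)^{-1}$ is a linear combination of $\cos 2\tht$ and $\sin 2\tht$, and zeroing it out places $A_2$ in the required antisymmetric form.

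With the normal forms established, the polynomial convexity conclusions would follow from the plurisubharmonic-defining-function constructions of Section~\ref{S:technical}, together with the pairwise polynomial convexity hypothesis. In the symmetric subcase of $(i)$ both $\mathscr{A}_1$ and $\scr{A}_2(T)$ are real symmetric with real spectra, and the defining function used to prove Theorem~\ref{T:totrlNplane}$(a)$ can be adapted with only superficial modifications. In the antisymmetric subcase of $(i)$ and throughout case~$(ii)$, the positivity hypothesis $\det(\scr{A}_j(T)+\scr{A}_j(T)^{\sf T})>0$ is the essential quantitative input: it is equivalent to positive-definiteness of the symmetric parts $\tfrac{1}{2}(\scr{A}_j(T)+\scr{A}_j(T)^{\sf T})$, and this is exactly what is required to assemble the two pairwise Weinstock defining functions into a single local function, strictly plurisubharmonic on the complement of $P_0\cup P_1\cup P_2$. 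The main obstacle I anticipate is this assembly step: Weinstock's construction is intrinsically about two planes, and passing to three forces one to control the interaction of the pairwise data on the triple intersection; the symmetric-part positivity plays the role, in the open setting, of the closed simultaneous-triangularizability condition~$(2)$ used in Theorem~\ref{T:totrlNplane}.

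For optimality I would take Thomas's one-parameter family $(P_0^{\eps},P_1^{\eps},P_2^{\eps})$ recorded as Result~\ref{R:PTnpcvx}, which already delivers pairwise polynomial convexity at $0$ together with failure of polynomial convexity of the full union. Reading off the matrices $(A_1^{\eps},A_2^{\eps})$ in Weinstock's normal form, it remains to verify that (a) $(A_1^{\eps},A_2^{\eps})\in\OM$, (b) $(A_1^{\eps},A_2^{\eps})\notin\OM^*$ for all $\eps>0$, and (c) $(A_1^{\eps},A_2^{\eps})\tends\bdy\OM^*$ as $\eps\searrow 0$. Items~(a) and~(b) reduce to explicit evaluations of the spectra, of $\det[A_1^{\eps},A_2^{\eps}]$ and of the symmetric-part determinants $\det(\scr{A}_j(T_\eps)+\scr{A}_j(T_\eps)^{\sf T})$ along Thomas's family; (c), the only nontrivial claim, amounts to showing that these determinants vanish in the $\eps\searrow 0$ limit, which places the limiting parameter exactly on the boundary stratum in $\OM$ of the failure locus of our open conditions.
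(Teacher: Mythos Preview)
Your normal-form reduction is correct and matches Lemma~\ref{L:norform3planes}, and your plan for optimality via Thomas's family is the right one. The gap is the central polynomial-convexity step. Section~\ref{S:technical} contains no plurisubharmonic-defining-function constructions; the relevant tools there are Kallin's lemma (Lemma~\ref{L:kallin}) and Stolzenberg's lemma, and the paper relies on the former. In every subcase with $\det[A_1,A_2]\neq0$ the argument runs as follows: take $F(z,w)=z^2+w^2$; then $F(K_0)\subset[0,\infty)$, while on $M(\scr{A}_j)$ one has $(z,w)^{\sf T}=(\scr{A}_j+i\id)v$ for $v\in\RR$, whence
\[
z^2+w^2=v^{\sf T}(\scr{A}_j^{\sf T}\scr{A}_j-\id)v \;+\; i\,v^{\sf T}(\scr{A}_j+\scr{A}_j^{\sf T})v.
\]
The hypothesis $\det(\scr{A}_j+\scr{A}_j^{\sf T})>0$ is precisely definiteness of this imaginary-part quadratic form, which forces $F(K_1\cup K_2)\subset(\cplx\setminus\rea)\cup\{0\}$. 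The pairwise hypothesis supplies polynomial convexity of $K_1\cup K_2$, and Kallin's lemma then separates $K_0$ from $K_1\cup K_2$. So the positivity condition is not used to ``assemble pairwise Weinstock defining functions'' as you propose; it is used to make $\imag F$ sign-definite on $P_1$ and on $P_2$.

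A related misstep: your claim that the symmetric subcase of $(i)$ follows from Theorem~\ref{T:totrlNplane}$(a)$ with only superficial changes is not right. That proof uses $F(z,w)=w$ and depends on simultaneous upper-triangularity; here $\scr{A}_2$ is full symmetric, not triangular, so the fibre argument of Lemma~\ref{L:S1} breaks down. The paper treats this subcase with the same $z^2+w^2$ computation as above, and the positivity hypothesis (which there amounts to $\lm_1\lm_2>0$ and $s_{21}s_{22}>t_2^2$) is genuinely needed.
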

 
A few words about the layout of this paper. The first half of the next section collects some 
useful technical results and in the second half we state and prove some useful lemmas in 
linear algebra. In the next three sections (Sections~\ref{Proof:totrlNplane}--\ref{Proof:3planesevrl}), 
we shall give the proof of the theorems. We would like the reader to realise that Part $(i)$ of 
Theorem~\ref{T:3planesevrl} is subsumed by Theorem~\ref{T:3planes}. For this reason, we shall  
prove Theorem~\ref{T:3planesevrl} in Section~\ref{Proof:3planesevrl} after we prove Theorem~\ref{T:3planes}.

\section{Technical preliminaries}\label{S:technical}

We shall require some preliminaries to set the stage for proving the theorems. First, we 
state a lemma from Weinstock's paper \cite{Wk} --- whose proof is quite easy --- that allows 
us to conjugate the matrices coming from Weinstock's normal form by real nonsingular matrices. 

\begin{lemma}\label{L:invertible}
 Let $T$ be a invertible linear operator on $\cplx^n$ whose matrix representation with respect 
to the standard basis is an $n \times n$ matrix with real entries. Then $T$ maps 
$M(A)\cup \rea^n$ onto $M(TAT^{-1}) \cup \rea^n$.
\end{lemma}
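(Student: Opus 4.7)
The plan is to verify the set equality $T(M(A) \cup \rea^n) = M(TAT^{-1}) \cup \rea^n$ by handling the two summands on the left separately and showing each lands where it should. Since $T$ is linear, $T(M(A) \cup \rea^n) = T(M(A)) \cup T(\rea^n)$, so it suffices to prove (a) $T(\rea^n) = \rea^n$ and (b) $T(M(A)) = M(TAT^{-1})$.

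Step (a) is immediate from the real-entries hypothesis together with the invertibility of $T$: regarding $T$ as acting on $\cplx^n$ by $\cplx$-linear extension of its action on $\rea^n$, the matrix identity with real entries forces $T(\rea^n) \subset \rea^n$, and since $T^{-1}$ also has real entries (inverse of a real invertible matrix is real), the same argument gives $T^{-1}(\rea^n) \subset \rea^n$, hence equality. Step (b) is the content of the lemma and rests on the single algebraic identity
\begin{equation*}
T(A + i\,\id) \;=\; (TAT^{-1} + i\,\id)\,T
\end{equation*}
as operators on $\cplx^n$; this is just $TA = (TAT^{-1})T$ together with the fact that $T$ commutes with the scalar $i\,\id$. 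Applying both sides to $\rea^n$ and using (a) then gives
\begin{equation*}
T(M(A)) \;=\; T(A+i\,\id)\rea^n \;=\; (TAT^{-1}+i\,\id)\,T(\rea^n) \;=\; (TAT^{-1}+i\,\id)\rea^n \;=\; M(TAT^{-1}),
\end{equation*}
which is exactly what is needed. To finish, note that the same argument applied to $T^{-1}$ and the matrix $TAT^{-1}$ shows $T^{-1}(M(TAT^{-1}) \cup \rea^n) \subset M(A) \cup \rea^n$, which upgrades the ``$\subset$'' to onto.

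There is no real obstacle here; the only subtlety worth flagging is the distinction between $T$ viewed as a real map on $\rea^n$ (used in step (a)) and $T$ viewed as its $\cplx$-linear extension to $\cplx^n$ (used when composing with $A + i\,\id$). Keeping the two viewpoints straight is what makes the commutation of $T$ with $i\,\id$ legitimate and makes the ``surjectivity'' half of the statement follow by symmetry.
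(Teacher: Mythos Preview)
Your proof is correct. The paper does not actually supply a proof of this lemma; it is stated as a result from Weinstock's paper \cite{Wk} with the remark that the proof is ``quite easy,'' and your argument is exactly the natural one that justifies that remark.
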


We now state the result by Weinstock \cite{Wk} which was already referred repeatedly in Section~\ref{S:intro}. 
This theorem will play a vital role in the proof of Theorem~\ref{T:totrlNplane}.
\begin{result}[Weinstock] \label{R:weinstock}
 Suppose $P_1$ and $P_2$ are two totally-real subspaces of $\cplx^n$ of maximal dimension intersecting
only at $0 \in \cplx^n$. Denote the normal form for this pair as:
\begin{align}
 P_1 &:  \rea^n, \notag\\
P_2 &: (A+i \id)\rea^n. \notag
\end{align}
$P_1 \cup P_2$ is locally polynomially convex at the origin if and only if $A$ has no purely
imaginary eigenvalue of modulus greater than 1.
\end{result}

Next, we state  two lemmas from the literature which will be used repeatedly in the proofs of 
our theorems. 
The first one --- due to Kallin \cite{K} --- deals with the polynomial convexity of the union of 
two polynomially convex sets. The second one --- which is a version of a lemma from 
Stolzenberg's paper \cite[Lemma 5]{St1} (also see Stout's book \cite{S1} ) --- gives a criterion for 
polynomial convexity of a compact set $K$ in terms of the existence of a function that 
belongs to the uniform algebra on $K$ generated by the polynomials and satisfies 
some special property.

\begin{lemma}[Kallin]\label{L:kallin}
 Let $K_1$ and $K_2$ be two compact polynomially convex subsets in $\cplx ^n$. Suppose
$L_1$ and $L_2$ are two compact polynomially convex subsets of $\cplx$ with 
$L_1 \cap L_2 =\{0\}$. Suppose further that there exists a holomorphic polynomial 
$P$ satisfying the following conditions:
 \begin{enumerate}
 \item[$(i)$] $P(K_1) \subset L_1$ and  $P(K_2) \subset L_2$; and
 \item[$(ii)$] $P^{-1}\{0\} \cap (K_1 \cup K_2)$ is polynomially convex.
 \end{enumerate}
 Then $K_1 \cup K_2$ is polynomially convex.
\end{lemma}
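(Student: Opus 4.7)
The plan is to prove $\hull{K_1 \cup K_2} \subseteq K_1 \cup K_2$. For arbitrary $a \in \hull{K_1 \cup K_2}$, set $w_0 := P(a)$; since polynomial maps carry polynomial hulls into polynomial hulls, $w_0 \in \hull{P(K_1) \cup P(K_2)} \subseteq \hull{L_1 \cup L_2}$. The strategy is to split on whether $w_0 = 0$: if not, $P$ acts as a separator of $K_1$ from $K_2$ and places $a$ in one of them; if $w_0 = 0$, hypothesis $(ii)$ takes over on the fiber $P^{-1}\{0\} \cap (K_1 \cup K_2)$.

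When $w_0 \neq 0$, say $w_0 \notin L_2$. Polynomial convexity of $L_2$ yields a polynomial $q$ on $\cplx$ with $|q(w_0)| > \sup_{L_2}|q|$; I would arrange in addition that $q$ peaks at $w_0$ on $L_1$, so $|q(w_0)| = \sup_{L_1}|q|$. For any polynomial $p$ on $\cplx^n$ and any integer $N \geq 1$, applying the defining inequality of $\hull{K_1 \cup K_2}$ to $p \cdot (q \circ P)^N$ gives
\[
|p(a)|\,|q(w_0)|^N \;\leq\; \max\Bigl(\sup_{K_1}|p|\,\bigl(\sup_{L_1}|q|\bigr)^N,\ \sup_{K_2}|p|\,\bigl(\sup_{L_2}|q|\bigr)^N\Bigr).
\]
Dividing by $|q(w_0)|^N$ and letting $N \to \infty$, the $K_2$-contribution vanishes and the $K_1$-term stabilizes, yielding $|p(a)| \leq \sup_{K_1}|p|$, and therefore $a \in \hull{K_1} = K_1$.

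When $w_0 = 0$, I aim to show $a \in \hull{P^{-1}\{0\} \cap (K_1 \cup K_2)}$, from which $(ii)$ delivers $a \in K_1 \cup K_2$. Given any polynomial $r$ on $\cplx^n$ and small $\epsilon, \delta > 0$, I would produce a polynomial $q$ on $\cplx$ with $q(0) = 1$ and $\sup_{(L_1 \cup L_2) \setminus B(0,\delta)}|q| < \epsilon$; applying the hull inequality to $r \cdot (q \circ P)$ and splitting the sup over $K_1 \cup K_2$ into the pieces $\{|P| < \delta\}$ and $\{|P| \geq \delta\}$ yields, in the limits $\epsilon \searrow 0$ and $\delta \searrow 0$, $|r(a)| \leq \sup_{P^{-1}\{0\} \cap (K_1 \cup K_2)}|r|$.

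The main obstacle is producing these polynomials $q$ on $\cplx$ with the required peaking or concentration behavior on $L_1 \cup L_2$. The union $L_1 \cup L_2$ is not assumed polynomially convex (its complement can disconnect), and interior points of $L_1$ or $L_2$ in $\cplx$ can never be peak points for the polynomial algebra (by the maximum principle), blocking the naive choice when $w_0$ or $0$ sits in such a position. One expects the remedy to combine degenerate-case reductions (for instance, if $0$ is interior to $L_1$, then $L_1 \cap L_2 = \{0\}$ forces $L_2 = \{0\}$ and the argument collapses to the fiber case) together with a Runge-type construction producing $q$ by separately approximating on each (polynomially convex) $L_j$ and gluing via a polynomial factor vanishing to high order at $0$ to reconcile the two approximations at their single common point.
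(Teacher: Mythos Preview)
The paper does not supply a proof of this lemma; it is quoted as a known result from the literature, attributed to Kallin~\cite{K} (with a pointer to Stout~\cite{S1}). So there is no proof here to compare your proposal against.

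That said, your sketch follows the standard strategy and correctly isolates the main subtlety (the peaking problem when $w_0$ or $0$ lies in the interior of some $L_j$). Two points where your discussion goes astray. First, under the stated hypotheses $L_1\cup L_2$ is in fact \emph{always} polynomially convex: each $L_j\subset\cplx$ has connected complement and $L_1\cap L_2$ is a single point, so an elementary Mayer--Vietoris (equivalently Alexander-duality) count shows $\cplx\setminus(L_1\cup L_2)$ is connected. Hence your parenthetical worry that ``its complement can disconnect'' is unfounded, and one has $w_0\in L_1\cup L_2$ from the outset. Second, your degenerate-case reduction is not correct as written: if $0$ lies in the interior of $L_1$, the condition $L_1\cap L_2=\{0\}$ forces only that $0$ is an \emph{isolated} point of $L_2$, not that $L_2=\{0\}$ (take $L_1$ a small disk about $0$ and $L_2=\{0\}\cup\{|z-10|\le 1\}$). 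The resolution in the general case is somewhat more delicate than your closing paragraph indicates; full proofs can be found in the references just cited.
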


Given a compact $X\subset \cplx^n$, $\poly(X)$ will denote the uniform algebra on $X$ generated by holomorphic 
polynomials.
\begin{lemma}[Stolzenberg] \label{L:S1}
Let $X \subset \cplx^n$ be compact. Assume $\poly(X)$ contains a function $f$ such that $f(X)$ has empty interior 
and $\cplx \setminus f(X)$ is connected. Then, $X$ is polynomially convex if and only if $f^{-1}\{w\} \cap X$ 
is polynomially convex for each $w \in f(X)$ .
 \end{lemma}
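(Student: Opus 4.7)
The plan is to prove both implications separately, with the reverse direction carrying essentially all of the work via Lavrentiev's theorem and the multiplicativity of the Gelfand extension on $\poly(X)$. Throughout I will use the standard identification of the maximal ideal space of $\poly(X)$ with $\hull{X}$: every $h\in \poly(X)$ has a continuous extension $\hull{h}$ to $\hull{X}$ satisfying $|\hull{h}(z_0)|\leq \sup_X |h|$ for all $z_0\in \hull{X}$, and $h\mapsto \hull{h}$ is multiplicative.

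For the forward direction, assume $X$ is polynomially convex and fix $w\in f(X)$. I would set $Y_w:=f^{-1}\{w\}\cap X$ and pick $z_0\in \hull{Y_w}$. Since $Y_w\subset X$, polynomial convexity of $X$ forces $z_0\in X$. Choosing polynomials $p_n\to f$ uniformly on $X$, one has $p_n-w\to 0$ uniformly on $Y_w$, so $|p_n(z_0)-w|\leq \sup_{Y_w}|p_n-w|\to 0$; simultaneously $p_n(z_0)\to f(z_0)$ because $z_0\in X$. Hence $f(z_0)=w$, i.e.\ $z_0\in Y_w$.

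For the reverse direction, I would fix $z_0\in \hull{X}$ and aim to place it in some fiber. The hypotheses on $f(X)$ -- empty interior and connected complement in $\cplx$ -- are precisely those of Lavrentiev's theorem, so every continuous function on $f(X)$ is a uniform limit of polynomials, and in particular $f(X)$ is polynomially convex in $\cplx$. Since $q\circ f\in \poly(X)$ for every polynomial $q\in \cplx[t]$, the point $w:=\hull{f}(z_0)$ satisfies $|q(w)|\leq \sup_X|q\circ f|=\sup_{f(X)}|q|$, so $w\in \hull{f(X)}=f(X)$. I then claim $z_0\in \hull{Y_w}$. To prove it, fix an arbitrary polynomial $p$ on $\cplx^n$ and, for each $\eta>0$, choose a continuous bump $\phi_\eta\colon \cplx\to[0,1]$ with $\phi_\eta(w)=1$ and $\phi_\eta\equiv 0$ off the disk $\{|t-w|<\eta\}$. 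By Lavrentiev, $\phi_\eta|_{f(X)}$ is uniformly approximable by polynomials on $f(X)$, so $\phi_\eta\circ f\in \poly(X)$ and $h_\eta:=p\cdot(\phi_\eta\circ f)\in \poly(X)$. Multiplicativity of the Gelfand extension then gives $\hull{h_\eta}(z_0)=p(z_0)\,\phi_\eta(\hull{f}(z_0))=p(z_0)$, whence
\[
|p(z_0)|\leq \sup_X |p\cdot(\phi_\eta\circ f)|\leq \sup_{\{z\in X\,:\,|f(z)-w|<\eta\}}|p|.
\]
A standard nested-compactness argument forces the right-hand side down to $\sup_{Y_w}|p|$ as $\eta\searrow 0$, yielding $|p(z_0)|\leq \sup_{Y_w}|p|$. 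Since $p$ was arbitrary, $z_0\in \hull{Y_w}$, and by hypothesis $\hull{Y_w}=Y_w\subset X$.

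The hard part will be producing $\phi_\eta\circ f$ as a genuine element of $\poly(X)$ whose Gelfand value at $z_0$ is exactly $p(z_0)$; this is what forces the full Lavrentiev strength on $f(X)$. Replacing it with a mere Runge-type approximation on $\{w\}\cup(f(X)\setminus\{|t-w|<\eta\})$ would leave the approximants uncontrolled on $f(X)\cap\{|t-w|<\eta\}$, and the two-sided estimate above would collapse. All remaining ingredients -- the spectrum identification, the forward direction, and the nested-compactness limit -- are routine manipulations in uniform algebras.
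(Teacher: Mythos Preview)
The paper does not supply a proof of Lemma~\ref{L:S1}; it is quoted from the literature (Stolzenberg \cite{St1}, with the precise formulation attributed to a remark following \cite[Theorem~1.2.16]{S1}). So there is nothing in the paper to compare your argument against line by line.

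That said, your proposal is a correct reconstruction of the standard proof and matches the classical route through Lavrentiev's theorem. A couple of small points worth tightening. In the forward direction your argument is clean. In the reverse direction, the crucial computation $\widehat{(\phi_\eta\circ f)}(z_0)=\phi_\eta(w)=1$ deserves one explicit line: choose polynomials $q_n$ with $q_n\to\phi_\eta$ uniformly on $f(X)$; then $q_n\circ f\to\phi_\eta\circ f$ in $\poly(X)$, so $\widehat{(\phi_\eta\circ f)}(z_0)=\lim q_n(\hull f(z_0))=\phi_\eta(w)$. You invoke this implicitly under ``multiplicativity of the Gelfand extension'', but the substitution property $\widehat{g\circ f}=g\circ\hull f$ for $g\in\mathcal C(f(X))$ is really the heart of the matter and is exactly where Lavrentiev enters. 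For the nested-compactness step, replace the open slabs $\{|f-w|<\eta\}$ by the closed ones $\{|f-w|\le\eta\}$ before passing to the limit; the inequality you wrote is only strengthened, and the intersection argument then goes through verbatim.

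Your closing paragraph about why Runge-type approximation would not suffice is accurate: without uniform control on all of $f(X)$ the Gelfand value of the approximants at $z_0$ is not pinned down, and the localization to the fiber collapses. This is precisely why the hypotheses on $f(X)$ (empty interior, connected complement) are stated as they are.
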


\noindent The version of Lemma~\ref{L:S1} that we stated above originates in a remark 
following~\cite[Theorem 1.2.16]{S1} in Stout's book.  
\smallskip

The next result, due to Florentino \cite{Fl}, concerns the simultaneous triangularizability of a family 
of matrices over the field of real numbers. Though the following theorem is 
actually valid over any integral domain, we shall state it over the field of real numbers, which 
is the field relevant to our proof.  

\begin{result}[Florentino, \cite{Fl}]\label{R:florentino}
 Let $\mathscr{A}=(A_1,\dots,A_n),\;A_j \in \rea^{2 \times 2}$, have reduced length $l\leq n$.
 Let $\mathscr{A'}\subset \mathscr{A} $ be a maximal reduction and, without loss of generality, 
let $\scr{A'}= (A_1, \dots, A_l)$. Then:

\begin{itemize}
\item[$(i)$] If $l=3$, $\mathscr{A}$ is triangularizable if and only if each $A_k$ is triangularizable, 
$det[A_j,A_k]=0,\;j,k\leq l$, and $Tr(ABC-CBA)=0$ for all $A,B,C \in \mathscr{A'}$. 
\item[$(ii)$] If $l \neq 3$, $\mathscr{A}$ is triangularizable if and only if each $A_k$ is 
triangularizable and $det[A_j,A_k]=0,\;j,k\leq l$.
\end{itemize}
\end{result}
\noindent We must clarify that, in the above result, the expression ``$A_k$ is triangularizable'' means 
that $A_k$ is similar to a real upper triangular matrix by conjugation with a {\em real} invertible matrix. 
Likewise, the expression ``$\mathscr{A}$ is triangularizable'' means that each member of $\mathscr{A}$ 
is triangularizable by conjugation by the same matrix. We refer to the reader to Definintion~\ref{D:reduced} 
for the definitions of reduction and reduced length. 
\smallskip

We can now appreciate the complexity of Condition $(2)$ in Theorem~\ref{T:totrlNplane}; the latter 
half of this condition is inherited from part $(i)$ of Result~\ref{R:florentino}. The case $l=3$ 
is genuinely exceptional. Florentino shows in \cite[Example~2.11]{Fl} that the condition 
$tr(A_1A_2A_3-A_3A_2A_1)=0$ cannot, in general, be dropped.
\medskip

Let us now state a result by Thomas~\cite{T1} which will play the key role in our argument in the proof of the 
optimality-part of Theorem~\ref{T:3planes}.
\begin{result}[Thomas, \cite{T1}]\label{R:PTnpcvx}
 There exist three pairwise transversal totally-real planes $P_j,\; 1 \leq j \leq 3$, in $\CC$
passing through origin such that:
\begin{itemize}
\item[$(i)$] $P_j \cup P_k$ is locally polynomially convex at $0 \in \CC$ for all $j \neq k $;
\item[$(ii)$] $((P_1 \cup P_2 \cup P_3) \cap \ba{\ball(0;1)} \; \hull{)}$ contains an open ball in $\CC$.
\end{itemize}
\end{result}

\noindent Note that, in the statement $(ii)$ of the above theorem, the radius of the closed
ball has no significant role. Since the set $P_1 \cup P_2 \cup P_3$ is invariant under all
real dilations, $(ii)$ would hold true with any $\ball(0;r)$, $r>0$, replacing the unit ball. 
We will see some more discussions on these planes \cite{T1} in 
Section~\ref{Proof:3planes}.
\smallskip

We now prove some lemmas that will be used in the proofs of the theorems. All the lemmas are 
linear algebraic in nature. We also prove a proposition --- an identity showing conditions of 
Theorem~\ref{T:3planes} are invariant under conjugation --- at the end of this section.
 \begin{lemma}\label{L:jordontype}
  Let $A \in \rea^{2\times 2}$ and suppose $A$ has non-real eigenvalues $p\pm iq$.
 Then, there exists $S \in GL(2,\rea)$ such that
\[
 S^{-1}AS=   \begin{pmatrix}
 p & -q \\
 q & p
\end{pmatrix}.
\]
\end{lemma}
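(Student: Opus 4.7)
The plan is to construct $S$ from the real and imaginary parts of a complex eigenvector of $A$, using the standard real-Jordan-form argument for a $2\times 2$ matrix with a complex-conjugate pair of eigenvalues.

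First I would view $A$ as acting on $\cplx^2$ and pick an eigenvector $v \in \cplx^2$ for the eigenvalue $p+iq$, so that $Av = (p+iq)v$. Writing $v = v_1 + i v_2$ with $v_1, v_2 \in \rea^2$ and equating real and imaginary parts of $Av = (p+iq)(v_1 + iv_2)$ gives
\[
Av_1 = p\, v_1 - q\, v_2, \qquad Av_2 = q\, v_1 + p\, v_2.
\]
Next I would check that $\{v_1, v_2\}$ is linearly independent over $\rea$. Indeed, if $v_2 = c\, v_1$ for some $c \in \rea$ (or symmetrically), then $v = (1+ic)v_1$ is a real scalar multiple of the real vector $v_1$, so $v_1$ would itself be a real eigenvector of $A$ with eigenvalue $p+iq$; but $Av_1$ is real, forcing $q=0$, contrary to hypothesis. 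The case $v_1 = 0$ is excluded similarly (then $v = iv_2$, giving the same contradiction), so $v_1, v_2$ form an $\rea$-basis of $\rea^2$.

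Finally I would set $S := [\,v_2 \mid v_1\,] \in GL(2,\rea)$ (columns $v_2$ and $v_1$ in that order). Writing $s_1 = v_2$ and $s_2 = v_1$, the displayed identities above become $As_1 = p\, s_1 + q\, s_2$ and $As_2 = -q\, s_1 + p\, s_2$, which is precisely the column-wise statement
\[
AS = S \begin{pmatrix} p & -q \\ q & p \end{pmatrix},
\]
so conjugating gives the desired form. The only nontrivial step is the linear-independence of $v_1, v_2$, which is handled by the short argument above; everything else is direct verification.
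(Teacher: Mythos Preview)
Your proof is correct and follows essentially the same approach as the paper's: split a complex eigenvector into real and imaginary parts, check they form an $\rea$-basis, and read off the matrix in that basis. Your column ordering $S=[v_2\mid v_1]$ and your direct linear-independence argument are minor (and in fact slightly cleaner) variants of the paper's choices; the paper instead deduces the independence of $v_1,v_2$ from the $\cplx$-independence of $v$ and $\bar v$.
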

\begin{proof}
Let $v$ be an eigenvector of $A$ corresponding to the eigenvalue
$p +iq$. Hence, $\ba{v}$ is an eigenvector of $A$ corresponding to the eigenvalue $p-iq$.
 Since $q \neq 0$, the set $\{v, \ba{v}\}$ is linearly independent over $\cplx$. Now,
writing $v=v_1+iv_2$, where $v_1,v_2 \in \RR$, we have the following:
\begin{equation}\label{E:Av}
Av=(p+iq)v \Longrightarrow Av_1= p v_1- q v_2 ~~ \text{and}~~ Av_2= q v_1+ p v_2.
\end{equation}
 Since $\{v, \ba{v}\}$ is linearly independent over $\cplx$, $\{v_1, v_2\}$ is also linearly
independent over $\cplx$. By \eqref{E:Av}, the representation of $A$ with respect to the basis $\{v_1, v_2\}$ is
$ \begin{pmatrix}
 p & -q \\
 q &  p
\end{pmatrix} $.
Hence, the basis-change matrix that transform $A$ to its representation with respect to the basis
$\{v_1, v_2\}$ is the desired real invertible matrix $S$.
\end{proof}

\begin{lemma}\label{L:rcommutator10}
Let $A_1,....., A_N \in \rea^{2\times 2}$ and assume that $det[A_j,A_k]=0\; \forall j\neq k$.
Assume also that $A_1$ has non-real eigenvalues.Then, there exists $S \in GL(2,\rea)$ such that
\[ S^{-1}A_jS=
 \begin{pmatrix}
 s_j & -t_j \\
 t_j&  s_j
\end{pmatrix},\quad j=1,\dots ,N,
\]
where $s_j,t_j \in \rea$, $j=1,\dots, N.$
 \end{lemma}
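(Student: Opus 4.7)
The plan is to first bring $A_1$ into the target block form by Lemma~\ref{L:jordontype}, and then exploit the vanishing-commutator-determinant hypothesis to force every other $A_j$, after conjugation by the same matrix, into the same form. Since the hypothesis $\det[A_j,A_k]=0$ survives simultaneous conjugation (as $S^{-1}[A_j,A_k]S=[S^{-1}A_jS,S^{-1}A_kS]$ and $\det$ is similarity-invariant), there is no loss in working in coordinates in which $A_1$ is already canonical.

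Concretely, since $A_1$ has non-real eigenvalues $s_1\pm it_1$ with $t_1\neq 0$, Lemma~\ref{L:jordontype} supplies $S\in GL(2,\rea)$ with
\[
S^{-1}A_1 S \;=\; C := \begin{pmatrix} s_1 & -t_1 \\ t_1 & s_1 \end{pmatrix}.
\]
Set $B_j:=S^{-1}A_jS$ for $j\geq 2$. It then suffices to establish the following purely algebraic statement: if $B\in \rea^{2\times 2}$ satisfies $\det[C,B]=0$, then $B$ has the shape $\bigl(\begin{smallmatrix} s & -t \\ t & s \end{smallmatrix}\bigr)$ for some $s,t\in \rea$.

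The proof of this statement is the only piece of real work, and it is a short direct computation. Writing $B=\bigl(\begin{smallmatrix} a & b \\ c & d \end{smallmatrix}\bigr)$ and expanding $[C,B]=CB-BC$, the entries of $[C,B]$ come out as constant multiples of $t_1(b+c)$ and $t_1(d-a)$, arranged symmetrically so that the resulting matrix has trace zero and
\[
\det[C,B] \;=\; -t_1^{\,2}\bigl[(b+c)^2+(d-a)^2\bigr].
\]
Since $t_1\neq 0$, the vanishing of this determinant is equivalent to the two real conditions $c=-b$ and $d=a$. Setting $s_j:=a$ and $t_j:=-b$ then exhibits $B_j$ in the prescribed form, completing the proof. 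The only place one could trip is in the commutator calculation itself, but it is entirely mechanical; there is no conceptual obstacle, and in particular the hypothesis on the remaining pairs $(A_j,A_k)$, $j,k\geq 2$, is not needed for this lemma.
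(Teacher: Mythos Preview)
Your proof is correct and follows essentially the same route as the paper: normalize $A_1$ via Lemma~\ref{L:jordontype}, then compute the commutator $[S^{-1}A_1S,\,S^{-1}A_jS]$ explicitly and read off $a=d$, $b=-c$ from the vanishing of its determinant. Your closing observation that only the conditions $\det[A_1,A_j]=0$ are actually used (not the full pairwise hypothesis) is accurate and worth noting.
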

\begin{proof}
  Since $A_1$ has non-real eigenvalues, by Lemma \ref{L:jordontype}, there exists $S \in GL(2,\rea)$ such that
\[
 S^{-1}A_1S=   \begin{pmatrix}
 p & -q \\
 q & p
\end{pmatrix},
\]
where $p \pm iq \,(q\neq 0)$ are the eigenvalues of $A_1$. Suppose
\[
 S^{-1}A_jS=
 \begin{pmatrix}
 a & b \\
 c & d
\end{pmatrix}, \; \text{for some $j: 1 \leq j \leq N$}.
\]
Since the determinant remains invariant under conjugation by an invertible matrix, by hypothesis:
\begin{equation}\label{E:det0}
det[S^{-1}A_1S,S^{-1}A_jS]=0.
\end{equation}
A simple calculation gives us
\[
 [S^{-1}A_1S,S^{-1}A_jS] =
\begin{pmatrix}
 -q(b+c) & q(a-d) \\
 q(a-d) & q(b+c)
\end{pmatrix}.
\]
Hence, from \eqref{E:det0} and the fact that $q \neq 0$, we infer that $b=-c, \; a=d$.
Thus, under the conjugation by $S$, we have
\[ S^{-1}A_jS=
 \begin{pmatrix}
 s_j & -t_j \\
 t_j&  s_j
\end{pmatrix},
\]
where $s_j,t_j \in \rea$, $j=1,\dots, N.$
\end{proof}

\begin{lemma}\label{a1a2}
 Let $A_1, A_2 \in \rea^{2 \times 2}$ be two matrices such that $det[A_1,A_2]=0$ and $A_1-A_2$ 
is invertible. Suppose $A_1$ has non-real eigenvalues. Then
\begin{itemize}
\item $A_2$ either has non-real eigenvalues or is a scalar matrix; and
\item
 $B:= (A_1A_2+ \id)(A_1-A_2)^{-1}$ has complex conjugate eigenvalues.
\end{itemize}
\end{lemma}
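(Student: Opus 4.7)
The plan is to reduce both matrices to a canonical form via Lemma~\ref{L:rcommutator10}. Since $A_1$ has non-real eigenvalues and $\det[A_1,A_2]=0$, that lemma produces $S \in GL(2,\rea)$ such that
\[
S^{-1}A_1 S = \begin{pmatrix} p & -q \\ q & p \end{pmatrix}, \qquad S^{-1}A_2 S = \begin{pmatrix} s & -t \\ t & s \end{pmatrix},
\]
with $q \neq 0$. The first bullet then follows at once: the eigenvalues of $A_2$ are $s \pm it$, so either $t \neq 0$, in which case $A_2$ has non-real eigenvalues, or $t = 0$ and $A_2 = s\,\id$ is scalar.

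For the second bullet I would exploit the subalgebra observation that matrices of the form $\begin{pmatrix} a & -b \\ b & a \end{pmatrix}$ constitute a commutative subring $\scr{R} \subset \rea^{2 \times 2}$ isomorphic to $\cplx$ via $\begin{pmatrix} a & -b \\ b & a \end{pmatrix} \longleftrightarrow a + ib$. In particular, $\scr{R}$ is a field, so it is closed under addition, multiplication, and inversion of non-zero elements. Both $S^{-1}A_1 S$ and $S^{-1}A_2 S$ lie in $\scr{R}$, hence so do $S^{-1}A_1 A_2 S + \id$ and $S^{-1}(A_1 - A_2) S$; the invertibility of $A_1 - A_2$ ensures that the latter corresponds under the isomorphism to the non-zero complex number $(p-s) + i(q-t)$, so its inverse lies in $\scr{R}$ as well. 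Combining these,
\[
S^{-1}BS \;=\; \bigl(S^{-1}A_1 S \cdot S^{-1}A_2 S + \id\bigr)\bigl(S^{-1}(A_1 - A_2) S\bigr)^{-1} \;\in\; \scr{R},
\]
so $S^{-1}BS$ has the shape $\begin{pmatrix} \alpha & -\beta \\ \beta & \alpha \end{pmatrix}$ with eigenvalues $\alpha \pm i\beta$. Since $B$ is similar to $S^{-1}BS$, it inherits these eigenvalues, which are complex conjugates of one another.

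There is no serious obstacle beyond invoking Lemma~\ref{L:rcommutator10}; the only point worth flagging is that the argument works uniformly whether $A_2$ is scalar or has non-real eigenvalues, because in both cases $S^{-1}A_2 S$ sits inside the subring $\scr{R}$, and the hypothesis that $A_1 - A_2$ is invertible is precisely what permits inversion of the corresponding non-zero element of $\scr{R} \cong \cplx$.
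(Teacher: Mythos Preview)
Your proof is correct and follows essentially the same approach as the paper: both invoke Lemma~\ref{L:rcommutator10} to simultaneously conjugate $A_1,A_2$ into the form $\begin{pmatrix} s_j & -t_j \\ t_j & s_j \end{pmatrix}$, from which the first bullet is immediate. For the second bullet the paper further conjugates by the complex matrix $\begin{pmatrix} 1 & i \\ i & 1 \end{pmatrix}$ to diagonalize explicitly, whereas you phrase the same observation as closure in the subring $\scr{R}\cong\cplx$; these are the same argument in slightly different language.
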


\begin{proof}
 Since $A_1$ has non-real eigenvalues and $det[A_1,A_2]=0$, appealing Lemma \ref{L:rcommutator10}, we see that
there exists $S \in GL(2,\rea)$ such that
\[ S^{-1}A_jS=
 \begin{pmatrix}
 s_j & -t_j \\
 t_j&  s_j
\end{pmatrix},~~j=1,2.
\]
This shows that $A_2$ either has non-real eigenvalues or is a scalar matrix.
\smallskip
Note that conjugating by the matrix $\begin{pmatrix}
  1 & i \\
  i & 1
\end{pmatrix}$, we see that
\[ A_j \sim
 \begin{pmatrix}
 \ba{\lm_j} & 0 \\
  0 &  \lm_j
\end{pmatrix}, \;j=1,2.
\]
Hence, $(A_1-A_2)$ and $(A_1A_2+I)$ can be conjugated by $S$ to diagonal matrices with diagonal entries
$(\ba{\lm_1 - \lm_2}, \lm_1-\lm_2)$ and $(\ba{\lm_1 \lm_2}+1, \lm_1 \lm_2 +1)$ respectively. Hence, by
examining $S^{-1}BS$, we see that the matrix $B=(A_1A_2+I)(A_1-A_2)^{-1}$ has complex 
conjugate eigenvalues.
\end{proof}

\begin{lemma}\label{L:norform3planes}
 Let $A_1,A_2 \in \rea^{2\times 2}$. Suppose $A_1$ has two distinct eigenvalues. Then
$\exists T\in GL(2,\rea)$ such that:
\begin{itemize}
 \item[$(i)$]  If $A_1$ has real eigenvalues and $det[A_1,A_2]\neq 0$, then
\[
TA_1T^{-1}=\begin{pmatrix}
                       \lm_1 & 0 \\
                       0 & \lm_2
                      \end{pmatrix}\;\; \text{and} \;\;
  TA_2T^{-1}= \begin{pmatrix}
                       s_{21} & t_2 \\
                       t_2 & s_{22}
                      \end{pmatrix}
\;\; \text{or} \;\;   \begin{pmatrix}
                       s_{21} & -t_2 \\
                       t_2 & s_{22}
                      \end{pmatrix} 
\]
 for  $\lm_j,s_{2j}, t_2 \in \rea, \; j=1,2$, 
\smallskip

\item[$(ii)$] If $A_1$ has non-real eigenvalues, then
\[TA_1T^{-1}= \begin{pmatrix}
                       s_{1} & -t_1\\
                       t_1 & s_{1}
                      \end{pmatrix}
 \;\; \text{and}\;\;  TA_2T^{-1} = \begin{pmatrix}
                       s_{21} & -t_2 \\
                       t_2 & s_{22}
                      \end{pmatrix} 
\]
 for $s_j,s_{2j}, t_j \in \rea, \; j=1,2$. 
\end{itemize}
\end{lemma}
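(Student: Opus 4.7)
The plan is to exploit, in each case, the residual conjugation freedom after a standard reduction of $A_1$: first I would conjugate $A_1$ into its real canonical form, and then use the centralizer of that canonical form to further normalize $A_2$.

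For part $(i)$, since $A_1$ has two distinct real eigenvalues it is $\rea$-diagonalizable, so I pick $S\in GL(2,\rea)$ with $SA_1S^{-1}=\mathrm{diag}(\lm_1,\lm_2)$ and write $SA_2S^{-1}=\begin{pmatrix} a & b\\ c & d\end{pmatrix}$. A one-line computation gives
\[
[SA_1S^{-1},SA_2S^{-1}]=\begin{pmatrix} 0 & (\lm_1-\lm_2)b\\ -(\lm_1-\lm_2)c & 0\end{pmatrix},
\]
so $\det[A_1,A_2]=(\lm_1-\lm_2)^2 bc$; since this determinant is conjugation-invariant, the hypothesis $\det[A_1,A_2]\neq 0$ forces $b,c\neq 0$. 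The subgroup of $GL(2,\rea)$ that fixes $\mathrm{diag}(\lm_1,\lm_2)$ under conjugation is essentially the diagonal torus $\{D_\al=\mathrm{diag}(1,\al):\al\neq 0\}$, and conjugating by $D_\al$ sends the $(1,2)$ and $(2,1)$ entries of $SA_2S^{-1}$ to $b/\al$ and $c\al$ respectively. I would choose $\al$ with $\al^2=|b/c|$; then $|b/\al|=|c\al|=\sqrt{|bc|}$, and the signs of the two entries agree iff $bc>0$ and are opposite iff $bc<0$. This yields the two normal-form alternatives in the statement.

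For part $(ii)$, Lemma~\ref{L:jordontype} furnishes $S\in GL(2,\rea)$ with $SA_1S^{-1}=\begin{pmatrix} s_1 & -t_1\\ t_1 & s_1\end{pmatrix}$, $t_1\neq 0$. The matrices commuting with this canonical form are exactly the nonzero elements of $\{u\,\id+vJ:u,v\in\rea\}$ with $J=\begin{pmatrix}0&-1\\1&0\end{pmatrix}$, and up to positive scaling these are the rotation matrices $R_\tht$. Writing $SA_2S^{-1}=\begin{pmatrix}a&b\\ c&d\end{pmatrix}$, I would carry out the direct computation showing that the sum of the $(1,2)$ and $(2,1)$ entries of $R_\tht (SA_2S^{-1})R_\tht^{-1}$ equals
\[
(a-d)\sin 2\tht+(b+c)\cos 2\tht,
\]
and this can always be made to vanish by a suitable real $\tht$ (or already vanishes, in which case $SA_2S^{-1}$ is of the desired shape). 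Taking $T=R_\tht S$ then yields the required conjugator.

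The obstacle I anticipate is minor and essentially bookkeeping: in part $(i)$, tracking the sign of $bc$ to decide which of the two normal-form alternatives one lands in; in part $(ii)$, verifying that the one-parameter family of $R_\tht$-conjugates of $SA_2S^{-1}$ always contains a member with antisymmetric off-diagonal entries, which is transparent from the trigonometric identity above. No deeper obstacle is expected.
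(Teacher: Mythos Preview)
Your proposal is correct and follows essentially the same approach as the paper: in $(i)$ you diagonalize $A_1$, use $\det[A_1,A_2]\neq 0$ to ensure both off-diagonal entries of $SA_2S^{-1}$ are nonzero, and then conjugate by a diagonal matrix from the centralizer to equalize their absolute values; in $(ii)$ you bring $A_1$ to its rotation-type form via Lemma~\ref{L:jordontype} and then use the rotation subgroup of its centralizer to kill the sum of the off-diagonal entries of the transformed $A_2$. The only cosmetic difference is that the paper parametrizes the centralizer in $(ii)$ by $G=\begin{pmatrix} g_1 & -g_2\\ g_2 & g_1\end{pmatrix}$ and solves a quadratic in $g_1/g_2$, whereas you use the equivalent trigonometric parametrization $R_\tht$ and the identity $(a-d)\sin 2\tht+(b+c)\cos 2\tht$.
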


\begin{proof}
\noindent $(i)$ Since $A_1$ has two distinct real eigenvalues, 
$A_1$ is diagonalizable over $\rea$, i.e. there exists a $S\in GL(2,\rea)$ such
that
\[
 SA_1S^{-1}=\begin{pmatrix}
                       \lm_1 & 0 \\
                       0 & \lm_2
                      \end{pmatrix},
\;\lm_1\neq \lm_2\in \rea.\]
Hence, without loss of generality, we can assume that $A_1= \begin{pmatrix}
                                                          \lm_1 & 0 \\
                                                            0 & \lm_2
                                                           \end{pmatrix}$.
Suppose $A_2 = \begin{pmatrix}
                       s_{21} & t_1 \\
                       t_2 & s_{22}
                      \end{pmatrix}$, 
$t_j, s_{2j}\in \rea$ for $j=1,2$. Observe that, in view of Result~\ref{R:florentino}, 
$t_1t_2=0 \Leftrightarrow det[A_1,A_2]=0$. Hence neither $t_1$ nor $t_2$ is zero.
We have, since $A_1$ commutes with all diagonal matrices, that 
\[
GA_1G^{-1}= A_1 \; \text{for}\; 
G= \begin{pmatrix}
                       g_1 & 0 \\
                       0 & g_2
                      \end{pmatrix},\;
\text{where}\; g_1g_2 \neq 0.
\]
 We also have, after conjugating $A_2$ by $G$, that
\begin{equation}\label{E:normforma2t1t2samesign}
 GA_2G^{-1}= \begin{pmatrix}
                       s_{21} & t_1g_1/g_2 \\
                       t_2g_2/g_1 & s_{22}
                      \end{pmatrix}.
\end{equation}

\noindent Now observe that if $t_1$ and $t_2$ are of same sign, then there exist $g_1, g_2 \in
\rea \setminus \{0\}$ such that
\[
 t_1g_1^2=t_2g_2^2.
\]
Therefore, in this case,
$\wtil{t_2}:= t_1g_1/g_2=t_2g_2/g_1$, and we conclude from \eqref{E:normforma2t1t2samesign} that 
\[
GA_2G^{-1}= \begin{pmatrix}
                       s_{21} & \wtil{t_2} \\
                      \wtil{t_2} & s_{22}
                      \end{pmatrix}.
\]
\smallskip

\noindent We also observe that, if $t_1$ and $t_2$ are of different sign, then
there exist $g_1, g_2 \in \rea\setminus \{0\}$ such that
\[
 t_1g_1^2+t_2g_2^2=0.
\]
Therefore, in this case, 
$\wtil{t_2}:= -t_1g_1/g_2=t_2g_2/g_1$, and we conclude from \eqref{E:normforma2t1t2samesign} that 
\[
GA_2G^{-1}= \begin{pmatrix}
                       s_{21} & -\wtil{t_2} \\
                       \wtil{t_2} & s_{22}
                      \end{pmatrix}.
\]
\bigskip

 \noindent $(ii)$ Since $A_1$ has non-real eigenvalues, say 
$s_1\pm it_1$, by Lemma~\ref{L:jordontype} there exists $S\in GL(2,\rea)$ such 
that
$SA_1S^{-1}= \begin{pmatrix}
                       s_{1} & -t_1\\
                       t_1 & s_{1}
                      \end{pmatrix}$, $s_1,t_1\in \rea $.
Hence, without loss of generality, we may assume that  
$A_1= \begin{pmatrix}
                       s_{1} & -t_1\\
                       t_1 & s_{1}
                      \end{pmatrix}$. 
Let 
\[
A_2= \begin{pmatrix}
                       m_{1} & m_2\\
                       m_3 & m_{4}
                      \end{pmatrix}, \;m_j\in \rea,\; j=1,2,3,4, 
\] 
with $m_2+m_3 \neq 0$; otherwise, there is nothing to prove.                                            
\smallskip
                      
We observe that $A_1$ commutes with all the matrices having the same structure 
as that of itself. Let $G:= \begin{pmatrix}
                       g_{1} & -g_2\\
                       g_2 & g_{1}
                      \end{pmatrix}$ 
with $g_1,g_2\in \rea$, $g_1^2+g_2^2=1$ and $g_1g_2\neq 0$.  Therefore, 
\[
GA_1G^{-1}= A_1,
\]             
and 
\begin{align}
GA_2G^{-1}&= \begin{pmatrix}
                       g_{1} & -g_2\\
                       g_2 & g_{1}
                      \end{pmatrix}
                      \begin{pmatrix}
                       m_{1} & m_2\\
                       m_3 & m_{4}
                      \end{pmatrix}
                      \begin{pmatrix}
                       g_{1} & g_2\\
                       -g_2 & g_{1}
                      \end{pmatrix} \notag\\
            &= \begin{pmatrix}
               g_1^2m_1-g_1g_2(m_2+m_3)+ g_2^2m_4 & g_1^2m_2+ g_1g_2(m_1-m_4)-g_2^2m_3 \\
               g_1^2m_3+ g_1g_2(m_1-m_4)-g_2^2m_2 &  g_2^2m_1+g_1g_2(m_2+m_3)+ g_1^2m_4 
               \end{pmatrix} \notag \\
            &=: \begin{pmatrix}
               f_1(g_1,g_2) & f_2(g_1,g_2) \\
               f_3(g_1,g_2) & f_4(g_1,g_2)
               \end{pmatrix}.   \notag        
\end{align}
\smallskip

Let us now look closely at the quadratic equation $f_2(g_1,g_2)+f_3(g_1,g_2)=0$ 
in $g_1,g_2$. This gives: 
\[
(m_2+m_3)(g_1^2-g_2^2)+2(m_1-m_4)g_1g_2=0 
\]
This implies, since $m_2+m_3 \neq 0$ and $g_1g_2 \neq 0$, 
\[
 \frac{g_1}{g_2} - \frac{g_2}{g_1} + 2\frac{m_1-m_4}{m_2+m_3} =0
\]
Looking the above as a quadratic in $\frac{g_1}{g_2}=:\mu$, we have 
\begin{equation}\label{E:normformquadratic}
 \mu^2+2 \frac{m_1-m_4}{m_2+m_3} \mu -1 =0. 
\end{equation}
The discriminant of the above quadratic is 
\[
 4 \left(\frac{m_1-m_4}{m_2+m_3} \right)^2 + 4, 
\]
 which is greater than zero for all $m_j\in \rea$, $j=1,2,3,4$. Hence \eqref{E:normformquadratic} has a real 
root, say $\mu_1$. 
Therefore, taking $G = \begin{pmatrix}
                            \mu_1 & -1\\
                             1 & \mu_1
                           \end{pmatrix}$, 
we have 
\begin{align}
 GA_1G^{-1} &= \begin{pmatrix}
              s_1 & -t_1 \\
              t_1 & s_1
             \end{pmatrix} \; \text{and}\;
GA_2G^{-1} = \begin{pmatrix}
               s_{21} & -t_2 \\
               t_2 & s_{22}
              \end{pmatrix}. \notag
\end{align}
\end{proof}

\section{The proof of Theorem~\ref{T:totrlNplane}} \label{Proof:totrlNplane}

We remind the reader that, in this section, the word ``triangularizable'' will refer to 
triangularization by a {\em real} matrix. 
\smallskip

\begin{proof}[Proof of Theorem~\ref{T:totrlNplane}]
 \noindent (a) Since all $A_j,\; j=1, \dots, N$, have real eigenvalues, each $A_j,\; j=1, \dots,N$, is triangularizable.
 We now appeal to Result~\ref{R:florentino} to get:
\[
 A_j \sim \begin{pmatrix}
 \mu_j & a_j\\
 0 & \nu_j
\end{pmatrix} ,~~~\mu_j,\nu_j, a_j \in \rea,\, j=1,...,N,
\]
by the conjugation with a common matrix $S\in GL(2, \rea)$. In view
of the Lemma \ref{L:invertible}, we may assume that
\[
 A_j = \begin{pmatrix}
 \mu_j & a_j\\
 0 & \nu_j
\end{pmatrix} ,\; \mu_j,\nu_j, a_j \in \rea,\, j=1,...,N.
 \]
>From this point, we will --- for simplicity of notation --- refer to each $S(P_j)$ 
as $P_j$, $j=0, \dots, N$. Hence, $P_j=M(A_j)$ for the preceding choice of $A_j$, 
$j=1, \dots,N$. We have
\[
 M(A_j)=\{((\mu_j+i)x+a_jy, (\nu_j+i)y): x,y \in \rea\},~~~\text{for all}~~ j=1,\dots,N.
\]
Let $K:= (\cup^N_{j=0}P_j) \cap \ba{\ball(0,1)}$ and $K_j:= P_j \cap K,\;  j=0, \dots, N $.
Since, for every $j=0,\dots,N$, $K_j$ is a compact subset of a totally real plane, $K_j$ is polynomially
convex. We shall use Lemma~\ref{L:S1} to show the polynomial convexity of $K= \cup_{j=1}^NK_j$. Consider the 
polynomial
\[
 F(z,w)=w .
\]
Clearly, there exists a real number $R>0$ such that:
\begin{equation} \label{E:imageK}
 F(K) \subset (\cup_{j=1}^N \{(\nu_j+i)y : y \in \rea, |y|\leq R\})\cup \{y : y \in \rea, |y| \leq R\}.
\end{equation}
Since each of the members in the union of the right hand side of \eqref{E:imageK} is a bounded real line segment 
in $\cplx$, $F(K)$ has no interior and $\cplx \setminus F(K)$ is connected. We shall now calculate 
$F^{-1}\{\zt\}\cap K$, where $\zt \in F(K)$. If $\zt \neq 0$, then $\zt \in F(K_{j^0})$ for 
some $j^0 \leq N$. Hence, we get
\begin{align}\label{E:non0t}
 F^{-1}\{\zt\}\cap K &=\begin{cases}
                     \{((\mu_{j^0}+i)x+ a_{j^0}\zt/(\nu_{j^0}+i), \zt) : x \in \rea\}\cap K, \;
                                                 & \text{if  $1 \leq j^0\leq N$},\\
                     \{(x,\zt)\in \CC : x \in \rea\} \cap K, \; & \text{if $j^0=0$}.
                      \end{cases}
\end{align}
We remark that $\zt\in F(K_{j^0})$ implies that $\zt/(\nu_{j^0}+i) \in \rea$. Also, $\nu_{j^0}+i \neq 0$ 
because, in the present case, $\nu_{j^0}\in \rea$.
If $\zt=0$, then we have 
\begin{align} \label{E:0t}
  F^{-1}\{0\}\cap K &= \left[ (\cup_{j=1}^N \{((\mu_j+i)x,0) \in \CC : x\in \rea\}) \cup \{(x,0) \in \CC : x \in \rea\} 
\right] \cap K
\end{align}
In view of \eqref{E:non0t}, we have the set $F^{-1}\{\zt\}\cap K$ is a single line segment in $\CC$ when 
$\zt \neq 0$. Hence,
\begin{equation}\label{E:inversenon0}
 ( F^{-1}\{\zt\}\cap K \hull{)} = F^{-1}\{\zt\}\cap K, \; \text{for $\zt \neq 0$}.
\end{equation}
>From \eqref{E:0t}, we see that $F^{-1}\{0\}\cap K$ is a union of line segments in $\cplx_z \times \{0\}$
intersecting only at the origin. Hence,
\begin{equation}\label{E:inverse0}
  ( F^{-1}\{0\}\cap K \hull{)} = F^{-1}\{0\}\cap K.
\end{equation}
Thus, in view of \eqref{E:inversenon0} and \eqref{E:inverse0}, we can appeal to Lemma~\ref{L:S1} to get 
 the polynomial convexity of $K$.
\medskip
  
\noindent (b) Without loss of generality, assume that $A_1$ has non-real complex eigenvalues. From 
Lemma~\ref{L:rcommutator10}, it follows that there exists $S \in GL(2,\rea)$ such that
\[ S^{-1}A_jS=
 \begin{pmatrix}
 s_j & -t_j \\
 t_j&  s_j
\end{pmatrix},\,s_j, t_j \in \rea, \;  j=1,\dots,N.
\]
Again, by Lemma \ref{L:invertible}, there is no loss of generality to assume
\[
A_j= \begin{pmatrix}
 s_j & -t_j \\
 t_j&  s_j
\end{pmatrix}, \,s_j, t_j \in \rea \; j=1,\dots,N.
\]
We will relabel $S(P_j)$ as $P_j$, $j=0, \dots, N$, exactly as in (a). Then
\[
 M(A_j)=\{((s_j+i)x-t_jy, t_jx+(s_j+i)y):~~x,y \in \rea \}, \;j=1,\dots,N.
\]
Note that, in this case, 
\begin{equation}\label{E:detA1A2}
 det(A_j-A_k) = (s_j-s_k)^2 +(t_j-t_k)^2 >0,
\end{equation}
and consequently, $M(A_j) \cap M(A_k)= \{0\}$ for $j \neq k$.

Suppose there does not exist any constant $c>0$ such that
\[
 V_j= c V_k,\; \text{for some  $j \neq k,\ 0 \leq j,k \leq N$}.
\]
 We shall show that $K:= (\cup^N_{j=0}P_j) \cap \ba{\ball(0,1)}= \cup_{j=0}^NK_j$ 
is polynomially convex, where $K_j:= P_j\cap K,\; 0 \leq j \leq N$. Each $K_j$ 
is necessarily polynomially convex. In this case we shall use Kallin's lemma 
(i.e Lemma~\ref{L:kallin}) to show that $K$ is polynomially convex. 
Let us consider the polynomial
\[
 F(z,w):=z^2+w^2.
\]
We shall now look at the image of $K_j$ under this map:
\begin{equation}\label{E:imageK0}
F(K_0) \subset \{z\in \cplx:~z\geq 0 \}= \{\al V_0:~ \al \geq 0\},
\end{equation}

\begin{align*}
 F((s_j+i)x-t_jy, t_jx+(s_j+i)y) &= ((s_j+i)x-t_jy)^2+(t_jx+(s_j+i)y)^2\\
  &= (s_j^2+t_j^2-1)(x^2+y^2)+ 2is_j(x^2+y^2).
\end{align*}
Hence,
\begin{equation}\label{E:imageKj}
 F(K_j) \subset \{\beta V_j \in \RR:~ \beta \geq 0\},\;j=1,\dots,N.
\end{equation}
Since there does not exist any constant $c>0$ such that
 $V_j= cV_k$, for some $ j \neq k,\;0 \leq j,k \leq N$,
we have by equations \eqref{E:imageK0} and \eqref{E:imageKj}:
\begin{equation}\label{E:kallincond1}
 F(K_l)\cap F(K_m)=\{0\}\;\text{for}\ l\neq m.
\end{equation}
Furthermore,
\begin{equation}\label{E:kallincond2}
F^{-1}\{0\}\cap K_j=\{0\} ~~\text{for all}~~ j=0,\dots,N. 
\end{equation}
>From \eqref{E:kallincond1}, \eqref{E:kallincond2},  it follows that:
\begin{itemize}
\item for each $j$, $F(K_j) $ lies in different line segment of $\cplx$, each of which has
an end at $0\in \CC$; and 
\item $F^{-1}\{0\} \cap K= \{0\}$, which is polynomially convex.
\end{itemize}
Since each $F(K_j),\; j=0, \dots, N$, is polynomially convex, the above shows that all the conditions 
of Kallin's lemma are satisfied. Hence, $K= \cup_{j=1}^N K_j$ is polynomially convex; i.e. 
$\cup_{j=0}^N P_j$ is locally polynomially convex at the origin.
\smallskip

We will prove the converse in its contrapositive fomulation.
Let there exist two numbers $l, m$ such that $l \neq m$ and for some constant $c>0$
\begin{equation*}
 V_l=cV_m.
\end{equation*}
This implies 
\begin{equation}\label{E:deteq1}
 det(A_l+i\id)=c.det(A_m+i\id).
\end{equation}
Without loss of generality, let us assume that $l=2$ and $m=1.$
>From \eqref{E:deteq1}, we get
\begin{equation}\label{E:deteq2}
 det[(A_1-i\id)(A_2+i\id)]= c.det(A_1^2+\id) >0.
\end{equation}
Note that if we view $(A_1-i\id)$ as a $\cplx$-linear transformation on $\CC$, then
\begin{align*}
 (A_1-i\id)(M(A_1)) &= (A_1^2+\id)\RR = \RR \\
(A_1-i\id)(M(A_2)) &=[A_1A_2+ \id +i(A_1-A_2)]\RR \\
&=[(A_1A_2+\id)(A_1-A_2)^{-1}+i\id]\RR \equiv (B+i\id)\RR.
\end{align*}
The first equality follows from the fact that $(A_1^2+\id)$ is invertible (because of \eqref{E:deteq2} 
and the fact that $M(A_1)$ is totally-real) and the invertibility of $(A_1 - A_2)$ follows from \eqref{E:detA1A2}.
Now, from Lemma \ref{a1a2}, we get that $B$ has complex conjugate eigenvalues. We now write $\sigma(B)=\{\mu, \ba{\mu}\}$,
and $\mu = s+it,~~s,t \in \rea$. 
>From \eqref{E:detA1A2} and \eqref{E:deteq2}, we get:
\[
 det(B+i\id)=(s^2+t^2-1)+2is >0.
\]
This implies $s=0$ and $|t|>1$. From an auxiliery result of Weinstock \cite[Theorem 2]{Wk}, it
follows that $\RR \cup M(B)=(A_1-i\id)(M(A_1)\cup M(A_2))$ is 
not locally polynomially convex at the origin. Equivalently, $\cup_{j=0}^NP_j$ is not locally 
polynomially convex at the origin.
\end{proof}

Let us make the following remark. 
\begin{remark}
 We observe that under Condition $(2)$ of the above theorem, local polynomial convexity of pairwise unions 
of $P_0, \dots, P_N$ at the origin imples local polynomial convexity of $\cup_{j=0}^nP_j$ at $0\in \CC$.
\end{remark}
\medskip

\section{Proof of the Theorem~\ref{T:3planes}}\label{Proof:3planes}
Before proceeding to the proof of Theorem~\ref{T:3planes}, we shall state some preliminaries 
needed in the proof of optimality part of Theorem~\ref{T:3planes}. Recall Result~\ref{R:PTnpcvx}, 
which gives a triple of totally-real planes whose union is not locally polynomially convex at $0\in\CC$ although 
each of the pairwise unions is locally polynomially convex at the origin. In the proof of Result~\ref{R:PTnpcvx}, 
Thomas \cite{T1} demonstrates a family of triples $(P_0^{\eps},P_1^{\eps},P_2^{\eps})$, where $\eps$ 
is a complex number close to $0$, having the above mentioned property. The planes in the above 
triples are graphs in $\CC$ with the following equations: 
\begin{align}
P_0^{\eps}~&:~ w=\ba{z} \notag\\
P_1^{\eps}~&:~ w=-\frac{\sqrt{3}(\sqrt{3}-i)}{2\eps}z+\frac{-1+\sqrt{3}i}{2}\ba{z} \notag\\
P_2^{\eps}~&:~ w= -\frac{\sqrt{3}(\sqrt{3}+i)}{2\eps}z-\frac{1+\sqrt{3}i}{2}\ba{z}. \notag  
\end{align}
In the proof of Theorem~\ref{T:3planes}, we shall restrict our attention to the above triples when $\eps \in 
\rea \setminus \{0\}$. 
We now apply a $\cplx$-linear change of coordinate $(z,w) \longmapsto (z+w, i(w-z))$ from $\CC$ to $\CC$. 
In the new coodinate, we have 
\begin{align}
P_0^{\eps} ~&:~ \RR \notag\\
P_j^{\eps} ~&:~ (A_j^{\eps}+i\id)\RR, \; j=1,2,  \notag
\end{align}  
where $A_j^{\eps}\in \rea^{2\times 2}$ have the following form: 
\begin{equation}\label{E:PTmatrices}
A_1^{\eps}= \begin{pmatrix}
                    \frac{\eps}{\sqrt{3}(1+\eps)} & \frac{1}{1+\eps} \\
                    -\frac{1}{1-\eps}  & -\frac{\eps}{\sqrt{3}(1-\eps)}
                    \end{pmatrix} 
\; \text{and}\; 
A_2^{\eps}= \begin{pmatrix}
                    -\frac{\eps}{\sqrt{3}(1+\eps)} & \frac{1}{1+\eps} \\
                    -\frac{1}{1-\eps}  & \frac{\eps}{\sqrt{3}(1-\eps)}
                    \end{pmatrix}. 
\end{equation}
\smallskip

We are now in a position to begin the proof of Theorem~\ref{T:3planes}.
\smallskip

\noindent{\em The proof of Theorem~\ref{T:3planes}:}
In view of Lemma~\ref{L:norform3planes}, we divide the proof of this theorem into 
two cases depending on the eigenvalues of $A_1$.
\smallskip

 \noindent {\em {\bf Case I.} When eigenvalues of $A_1$ are real and distinct.}
\smallskip

\noindent First, let us consider the case when $det[A_1,A_2]=0$. By Lemma~\ref{L:rcommutator10}, 
$A_1$ and $A_2$ both have real eigenvalues, whence they are triangularizable over $\rea$. 
Hence, Theorem~\ref{T:totrlNplane} applies, and from Part (a) of that theorem, we are done.
\smallskip

When $det[A_1,A_2]\neq 0$, the first assertion of $(i)$ of our theorem follows from Part $(i)$ of
Lemma~\ref{L:norform3planes}. By hypothesis, $det(\scr{A}_1+\scr{A}_1^{\sf T})>0$, 
and there is a $T_0\in GL(2,\rea)$ such that $det(\scr{A}_2(T_0)+\scr{A}_2(T_0)^{\sf T})>0$.
For simplicity of notation, for the remainder of this proof, we shall write $\scr{A}_2:=\scr{A}_2(T_0)$. 
Hence: 
\[
\mathscr{A}_2:= T_0A_2T_0^{-1} = \begin{pmatrix}
                            s_{21} & t_2 \\
                            t_2 & s_{22}
                           \end{pmatrix} 
\; \text{or}\; \begin{pmatrix}
                s_{21} & -t_2 \\
                t_2 & s_{22}
               \end{pmatrix}.
\]
We shall now divide the proof into two subcases. Once again, for simplicty of notation, {\em we shall 
follow the conventions of the proof of Theorem~\ref{T:totrlNplane} and denote the planes 
$T_0(P_j) as P_j$, $j=0,1,2$}.
\smallskip

 \noindent {\em {\bf (a)} When $\mathscr{A}_1= \begin{pmatrix}
                                             \lm_1 & 0 \\
                                               0 & \lm_2
                                               \end{pmatrix}$ 
and $\scr{A}_2 = \begin{pmatrix}
                  s_{21} & t_2 \\
                  t_2 & s_{22}
                  \end{pmatrix}$.}
\smallskip

Let $K_j= P_j \cap \ba{\ball(0;1)}$, $j=0,1,2.$ Therefore, we have 
\begin{align}
 K_1& \subset \{((\lm_1+i)x, (\lm_2+i)y)\in \CC~:~x,y \in \rea\}, \notag \\
K_2 & \subset \{((s_{21}+i)x+t_2y, t_2x+(s_{22}+i)y)\in \CC~:~ x,y \in \rea \}. \notag
\end{align}

We shall, in view of the condition that pairwise unions are locally polynomially convex at 
$0\in \CC$, use Kallin's lemma to show the polynomial convexity of $K_0 \cup K$, 
where $K:= K_1\cup K_2$. For that, consider the polynomial 
\[
 F(z,w)= z^2+w^2.
\]
Clearly, 
\begin{equation}\label{E:fk0}
 F(K_0)\subset \{z\in \cplx~:~ z\geq 0 \}.
\end{equation}
 For $(z,w)\in K_1$, we have 
\begin{equation}\label{E:lm1lm2}
 F(z,w)= F((\lm_1+i)x, (\lm_2+i)y)= (\lm_1^2-1)x^2+(\lm_2^2-1)y^2 + 2i(\lm_1x^2+\lm_2y^2),
\end{equation}
and, for $(z,w)\in K_2$, 
\begin{align}
 F(z,w)&=F((s_{21}+i)x+t_2y, t_2x+(s_{22}+i)y) \notag \\
&= (s_{21}^2+t_2^2-1)x^2+(s_{22}^2+t_2^2-1)y^2+ 2(s_{21}+s_{22})t_2xy \notag\\ 
& \qquad\qquad\qquad\qquad\qquad\qquad+ 2i(s_{21}x^2+s_{22}y^2+2t_2xy) \label{E:antdiagsame}.
\end{align}
By hypothesis, we have 
\begin{align}
 det(\scr{A}_1+\scr{A}_1^{\sf T})>0 & \impl \lm_1\lm_2>0 \label{E:deta1}, \\
det(\scr{A}_2+\scr{A}_2^{\sf T})>0 & \impl s_{21}s_{22}>t_2^2>0 \label{E:deta2}.
\end{align}

Hence, in view of  \eqref{E:deta1} and \eqref{E:deta2},  
equations~\eqref{E:lm1lm2} and \eqref{E:antdiagsame} give us 
\[
 F(K) \subset (\cplx\setminus \rea)\cup\{0\}.
\]
Hence, we get 
\begin{equation}\label{E:fk}
\hull{F(K_0)}\cap \hull{F(K)}= \{0\},
\end{equation}
and 
\begin{equation}\label{E:finv0}
 F^{-1}\{0\}\cap (K_0\cup K)=\{0\}.
\end{equation}

\noindent Therefore, from \eqref{E:fk0}, \eqref{E:fk} and \eqref{E:finv0}, all the conditions 
of Lemma~\ref{L:kallin} are satisfied. Hence, $K_0\cup K$ is polynomially 
convex. 
\smallskip

\noindent {\em {\bf (b)} When $\mathscr{A}_1= \begin{pmatrix}
                                             \lm_1 & 0 \\
                                               0 & \lm_2
                                               \end{pmatrix}$ 
and $\scr{A}_2 = \begin{pmatrix}
                  s_{21} & -t_2 \\
                  t_2 & s_{22}
                  \end{pmatrix}$.}
\smallskip

As above, let $K_j= P_j \cap \ba{\ball(0;1)}$, $j=0,1,2,$ and $K=K_1\cup K_2$. We also get $K_1$ to be 
the same as that in subcase {\bf (a)} and 
\[
K_2 \subset \{((s_{21}+i)x-t_2y, t_2x+(s_{22}+i)y)\in \CC~:~ x,y \in \rea \}.  
\]
Again, we consider the polynomial 
\[
 F(z,w)= z^2+w^2.
\]

\noindent When $(z,w)\in K_1$, $F(z,w)$ is as in equation \eqref{E:lm1lm2}, and, for $(z,w)\in K_2$, 
\begin{align}
 F(z,w)&= F((s_{21}+i)x-t_2y, t_2x+(s_{22}+i)y)\notag\\
 &=  (s_{21}^2+t_2^2-1)x^2+(s_{22}^2+t_2^2-1)y^2+ 
2(s_{22}-s_{21})t_2xy +2i(s_{21}x^2+s_{22}y^2). \label{E:antdiagdiff}
\end{align}
The inequality in \eqref{E:deta1} remains the same but \eqref{E:deta2} is 
replaced by: 
\begin{align}
det(\scr{A}_2+\scr{A}_2^{\sf T})>0 & \impl s_{21}s_{22}>0 \label{E:deta2not2}.
\end{align}

In view of equations \eqref{E:deta1} and \eqref{E:deta2not2}, the expressions in 
\eqref{E:fk0}, \eqref{E:lm1lm2} and \eqref{E:antdiagdiff} give
\[
 \hull{F(K_0)}\cap \hull{F(K)}=\{0\}, 
\]
and 
\[
 F^{-1}\{0\}\cap (K_0\cup K)=\{0\}.
\]
Therefore, all the conditions of Kallin's lemma are satisfied. Hence $K_0\cup K$ 
is polynomially convex.
\medskip

\noindent {\em {\bf Case II.} When $A_1$ has non-real eigenvalues.}
\smallskip

\noindent Since $\sm(A_1)\subset \cplx \setminus \rea$, the first part of $(ii)$ of 
our theorem follows from Lemma~\ref{L:norform3planes}, Part $(ii)$. By hypothesis, 
$det(\scr{A}_1+\scr{A}_1^{\sf T})>0$ and $\exists T_0\in GL(2,\rea)$ such that 
$det(\scr{A}_2(T_0)+\scr{A}_2(T_0)^{\sf T})>0$, where  
\[
\scr{A}_1=\begin{pmatrix}
          s_{1} & -t_1 \\
          t_1 & s_{1}
          \end{pmatrix}
\; \text{and} \;
\scr{A}_2:= \scr{A}_2(T_0) = \begin{pmatrix}
                             s_{21} & -t_2 \\
                              t_2 & s_{22}
                              \end{pmatrix}. 
\]

As in the previous case, let $K_j= P_j \cap \ba{\ball(0;1)}$, $j=0,1,2$. We have 
\begin{align}
K_0 & \subset \{(x,y)\in \CC~:~ x,y \in \rea \}, \notag\\
 K_1& \subset \{((s_1+i)x-t_1y, t_1x+(s_1+i)y)\in \CC~:~x,y \in \rea \}, \notag \\
K_2 & \subset \{((s_{21}+i)x-t_2y, t_2x+(s_{22}+i)y)\in \CC~:~ x,y \in \rea \}. \notag
\end{align}
We shall again use Kallin's lemma to show the polynomial convexity of 
$K_0\cup K_1 \cup K_2$. Consider the polynomial 
\[
 F(z,w)=z^2+w^2.
\]
When $(z,w)\in K_0$, $F(z,w)$ is as in \eqref{E:fk0}. 
For $(z,w)\in K_1$, we have 
\begin{align}
 F(z,w) & =F((s_1+i)x-t_1y, t_1x+(s_1+i)y) \notag\\
& = (s_1^2+t_1^2-1)(x^2+y^2) + 2is_1(x^2+y^2), \label{E:cpev} 
\end{align}
and for $(z,w)\in K_2$, $F(z,w)$ is as in equation~\eqref{E:antdiagdiff}. 
Let $K=K_1\cup K_2$. As before, from homogeneity of the totally-real planes and the hypothesis that 
the pairwise unions of the given totally-real planes are locally 
polynomially convex at the origin, $K$ is polynomially convex. By hypotheses, we get that 
\begin{align}
 det(\scr{A}_1+\scr{A}_1^{\sf T})>0 & \impl s_1^2>0 \label{E:deta1c}\\
det(\scr{A}_2+\scr{A}_2^{\sf T})>0 & \impl s_{21}s_{22}>0 \label{deta2c}.
\end{align}
 \smallskip

Hence, in view of \eqref{E:fk0}, \eqref{E:cpev} and \eqref{E:antdiagdiff}, we conclude that 
\[
 F(K_0) \subset \{z\in \cplx : z\geq 0\}; F(K)\subset (\cplx \setminus \rea)\cup \{0\},
\]
and
\[
 F^{-1}\{0\}\cap(K_0\cup K)= \{0\}.
\]
Therefore, all the conditions of Lemma~\ref{L:kallin} are satisfied. Hence, $K_0\cup K$ is 
polynomially convex.
\smallskip

It is now time to show that our conditions are optimal. We examine the one-parameter family of 
triples $(P_0^{\eps},P_1^{\eps},P_2^{\eps})$, where $P_0^{\eps}=\RR \; \forall \eps $ 
and $P_j^{\eps}$ are as determined by the matrices $A_j^{\eps}$, $j=1,2$, given in 
\eqref{E:PTmatrices}. From the discussion preceding \eqref{E:PTmatrices} we already know 
that pairwise unions of $P_0^{\eps},P_1^{\eps},P_2^{\eps}$ are locally polynomially convex 
at $0\in \CC$ and that $P_0^{\eps}\cup P_1^{\eps}\cup P_2^{\eps}$ $\forall \eps \in \rea \setminus \{0\}$, 
{\em for $\eps$ sufficiently small}, is not locally polynomially convex at $0\in \CC$. So we need to 
show that $(A_1^{\eps},A_2^{\eps})\in \OM\setminus \OM^*$ and 
$(A_1^{\eps},A_2^{\eps})\tends \bdy \OM^*$ as $\eps \tends 0$.
\smallskip

An elementary computation gives: 
\begin{align} 
\sm(A_1^{\eps}) &= \left\lbrace\frac{-\eps^2+ \sqrt{4\eps^2 -3}}{\sqrt{3}(1-\eps^2)}, \frac{-\eps^2-\sqrt{4\eps^2 -3}}
{\sqrt{3}(1-\eps^2)}\right\rbrace \notag \\
\sm(A_2^{\eps}) &= \left\lbrace \frac{\eps^2+ \sqrt{4\eps^2-3}}{\sqrt{3}(1-\eps^2)}, \frac{-\eps^2- 
\sqrt{4\eps^2 -3}}{\sqrt{3}(1-\eps^2)}\right\rbrace.\notag 
\end{align}
Clearly, for $\eps~:~0<\eps \ll 1$, $(A_1^{\eps},A_2^{\eps})\in \OM$. Now, from 
 $(ii)$ (read in the contrapositive) in the statement of Theorem~\ref{T:3planes} and Result~\ref{R:PTnpcvx}, 
we already know that $(A_1^{\eps},A_2^{\eps})\notin \OM^*$ $\forall \eps~:~
0<\eps \ll 1$. Hence, 
\[
(A_1^{\eps},A_2^{\eps})\in \OM \setminus \OM^* \; \forall \eps~:~0<\eps\ll 1. 
\]
Now observe that: 
\[
\lim_{\eps \tends 0} A_j^{\eps}= \begin{pmatrix}
                                                   0 & 1\\
                                                   -1 & 0
                                                  \end{pmatrix}  
=: A_j^0 , \; j=1,2.
\]
In the notation of the statement of Theorem~\ref{T:3planes},
 $\scr{A}_1^0=  \begin{pmatrix}
                 0 & 1\\
                 -1 & 0
                \end{pmatrix}.$
Define 
\[
\scr{S}:=\{T\in GL(2,\rea) : TA_1^0T^{-1}=\scr{A}_1^0\}. 
\]
Clearly, $\id \in \scr{S}$ 
and, in our notation: 
\[
 det(\scr{A}_2^0(\id)+\scr{A}_2^0(\id)^{\sf T}) = det(A_2^0+(A_2^0)^{\sf T})=0.
\]

Thus, appealing to the inequalities in Part $(ii)$ of our theorem that define $\OM^*$, 
we get $(A_1^0,A_2^0)\in \bdy \OM^*$.

\section{Proof of the Theorem~\ref{T:3planesevrl}}\label{Proof:3planesevrl}
\smallskip

\begin{proof}[Proof of Theorem~\ref{T:3planesevrl}]
\noindent $(i)$ First, we shall prove three claims. From these claims, the proof of Part $(i)$ 
Theorem~\ref{T:3planesevrl} will follow by appealing to Theorem~\ref{T:3planes}.
\smallskip

\noindent {\em {\bf Claim 1.} If $det[A_1,A_2]>0$, then $A_j$ cannot have non-real 
eigenvalues.}
\smallskip

\noindent {\em Proof of the Claim 1.} Suppose $A_1$ has non-real eigenvalues. Then, in view of 
Lemma~\ref{L:norform3planes}, there exists a $T\in GL(2, \rea)$ such that 
\[
 TA_1T^{-1}= \begin{pmatrix}
              s_1 & -t_1 \\
              t_1 & s_1
             \end{pmatrix} 
\; \text{and}\; 
TA_2T^{-1}= \begin{pmatrix}
             s_{21} & -t_2 \\
              t_2 & s_{22}
            \end{pmatrix}.
\]
Now, by a simple computation, we see that 
\[
 det[TA_1T^{-1}, TA_2T^{-1}]= -t_1^2(s_{22}-s_{21})^2 \leq0, 
\]
which is a contradiction to the fact that $det[A_1, A_2]>0$. Hence, $A_j$ 
cannot have non-real eigenvalues.
\smallskip

\noindent {\em {\bf Claim 2.} If $det[A_1,A_2]>0$, then each $A_j$ has distinct eigenvalues.}
\smallskip

\noindent {\em Proof of Claim 2.} Suppose $A_1$ does not have distinct eigenvalues. Let 
$\sm (A_1)=\{\lm_1\}$. By Claim 1, we have $\lm_1\in \rea$. Hence, there exists 
$T\in GL(2,\rea)$ such that 
\[
 TA_1T^{-1}= \begin{pmatrix}
              \lm_1 & \mu \\
              0 & \lm_1
             \end{pmatrix}. 
\]
Let us write $TA_2T^{-1}= TA_2T^{-1}= \begin{pmatrix}
                                          a & b \\
                                           c & d
                                       \end{pmatrix}.$
Then, again, by a simple computation, we see that 
\[
 det[TA_1T^{-1}, TA_2T^{-1}]= -c^2 \mu^2 \leq 0,
\]
which is a contradiction.
\smallskip

\noindent {\em {\bf Claim 3.} If $det[A_1,A_2]>0$, then there exists a $T \in GL(2, \rea)$ such that} 
\[
TA_1T^{-1}= 
\begin{pmatrix}
 \lm_1 & 0 \\
  0 & \lm_2
 \end{pmatrix},\; 
TA_2T^{-1}= \begin{pmatrix}
              s_{21} & t_1 \\
               t_1 & s_{22}
             \end{pmatrix}.
\]
 
\smallskip

\noindent {\em Proof of Claim 3.}  In view of Claim 1 and Claim 2, we conclude that $A_1$ has distinct 
eigenvalues. Hence, applying Lemma~\ref{L:norform3planes}, we get that there exists 
$T\in GL(2, \rea)$ such that 
\[
 TA_1T^{-1}= 
\begin{pmatrix}
 \lm_1 & 0 \\
  0 & \lm_2
 \end{pmatrix},\;\text{and}\; 
TA_2T^{-1}= \begin{pmatrix}
              s_{21} & t_1 \\
               t_1 & s_{22}
             \end{pmatrix}
\; \text{or}\; 
\begin{pmatrix}
              s_{21} & -t_1 \\
               t_1 & s_{22}
             \end{pmatrix}.
\]

Suppose $TA_2T^{-1}=\begin{pmatrix}
              s_{21} & -t_1 \\
               t_1 & s_{22}
             \end{pmatrix}$. Again calculating the commutator, we note that 
\[
 det[TA_1T^{-1}, TA_2T^{-1}]= -t_2^2(\lm_2 - \lm_1)^2 \leq 0,
\]
which is a contradiction. Hence the claim.

\medskip

We now resume the proof of Theorem~\ref{T:3planesevrl}. 
In view of Claim 3, we always get a $T \in GL(2, \rea)$ such that 
\[
TA_1T^{-1}= 
\begin{pmatrix}
 \lm_1 & 0 \\
  0 & \lm_2
 \end{pmatrix},\;\text{and}\; 
TA_2T^{-1}= \begin{pmatrix}
              s_{21} & t_1 \\
               t_1 & s_{22}
             \end{pmatrix}.
\]
We now observe that, in this case, 
\begin{align}
det(\scr{A}_1+\scr{A}_1^{\sf T}) &= 4detA_1, \notag\\
det(\scr{A}_2+\scr{A}_2^{\sf T})&= 4detA_2. \notag
\end{align} 
Hence, the conditions $detA_j>0$ imply that we can appeal Part $(i)$ of Theorem~
\ref{T:3planes}. Therefore, $P_0\cup P_1 \cup P_2$ is 
locally polynomially convex at the origin. 
\medskip

\noindent $(ii)$ We shall again use Kallin's lemma for the proof of this part. Before that, let us obtain 
simpler form of the matrices that will be used in the proof. 
\smallskip

\noindent {\bf Claim 4.} {\em It suffices to work with the union $\RR\cup M(\scr{A}_1)\cup M(\scr{A}_2)$, 
where:}  
\[
 \scr{A}_1= \begin{pmatrix}
             \lm_1 & 0 \\
              0 & \lm_2
            \end{pmatrix}
\; \text{and}\; \scr{A}_2 = 
\begin{pmatrix}
 s_{21} & -t_2 \\
 t_2 & s_{22}
\end{pmatrix}.
\]
\smallskip

\noindent {\em Proof of the Claim.} Since $detA_j<0$ for $j=1,2$, each $A_j$ must have real distinct eigenvalues. 
Hence, in view of Lemma~\ref{L:norform3planes}, we can find a $T\in GL(2, \rea)$ such that 
\[
TA_1T^{-1}=\begin{pmatrix}
                       \lm_1 & 0 \\
                       0 & \lm_2
                      \end{pmatrix}\;\; \text{and} \;\;
  TA_2T^{-1}= \begin{pmatrix}
                       s_{21} & t_2 \\
                       t_2 & s_{22}
                      \end{pmatrix}
\;\; \text{or} \;\;   \begin{pmatrix}
                       s_{21} & -t_2 \\
                       t_2 & s_{22}
                      \end{pmatrix}, 
\]
 for  $\lm_j,s_{2j}, t_2 \in \rea, \; j=1,2$. Let $\scr{A}_j=TA_jT^{-1}$ for $j=1,2$. 

Suppose $\scr{A}_2=\begin{pmatrix}
                       s_{21} & t_2 \\
                       t_2 & s_{22}
                      \end{pmatrix}$. Then, by a simple computation, we can see that 
\[
 det[A_1,A_2]=det[\scr{A}_1, \scr{A}_2]= (\lm_1-\lm_2)^2t_2^2 >0.
\]
This is a contradiction to the assumption that $det[A_1,A_2]<0$. Hence, 
\[
 \scr{A}_2 = \begin{pmatrix}
                       s_{21} & -t_2 \\
                       t_2 & s_{22}
                      \end{pmatrix}.
\]
The claim follows from Lemma~\ref{L:invertible}.
\smallskip

As before, to simplify notation, we shall denote $M(\scr{A}_j)$ as $P_j$, $j=1,2$.
As in the earlier cases, let $K_j= P_j \cap \ba{\ball(0;1)}$, $j=0,1,2$. We have 
\begin{align}
K_0 & \subset \{(x,y)\in \CC~:~ x,y \in \rea\}, \notag\\
 K_1& \subset \{((\lm_1+i)x, (\lm_2+i)y)\in \CC~:~x,y \in \rea \}, \notag \\
K_2 & \subset \{((s_{21}+i)x-t_2y, t_2x+(s_{22}+i)y)\in \CC~:~ x,y \in \rea \}. \notag
\end{align}
We now show that  
$K_0\cup K$ ia polynomially convex, where $K=K_1 \cup K_2$. Consider the polynomial 
\[
 F(z,w)=z^2-w^2.
\]
Clearly, 
\begin{equation}\label{E:fko3planesevrlii}
 F(K_0)\subset \subset \rea \subset \cplx.
\end{equation}
For $(z,w)\in K_1$, we have 
\begin{align}
 F(z,w)&= F((\lm_1+i)x, (\lm_2+i)y) \notag\\
       & = (\lm_1^2-1)x^2+(1-\lm_2^2)y^2+2i(\lm_1x^2-\lm_2y^2) \label{E:fk13planesevrlii},
\end{align}
and, for $(z,w)\in K_2$, 
\begin{align}
 F(z,w) &= F((S_{21}+i)x-t_2y, t_2x+(s_{22}+i)y) \notag \\
        &= (s_{21}^2-t_2^2-1)x^2+(1-s_{22}^2+t_2^2)y^2-2(s_{21}+s_{22})t_1xy \notag\\
&\qquad\qquad\qquad\qquad\qquad\qquad\qquad\qquad+ 2i(s_{21}x^2-s_{22}y^2-2t_2xy).\label{E:fk23planesevrlii} 
\end{align}
 We now show that $\hull{F(K)}\cap \hull{F(K_0)}=\{0\}$. We have 
\begin{align}
 detA_1<0 & \impl \lm_1\lm_2<0, \label{E:3planesevrldeta1} \\
detA_2<0 & \impl s_{21}s_{22}<-t_2^2<0. \label{E:3planesevrldeta2}
\end{align}
In view of \eqref{E:3planesevrldeta1} and \eqref{E:3planesevrldeta2}, expressions \eqref{E:fk13planesevrlii} 
and \eqref{E:fk23planesevrlii} give us 
\[
 F(K)\subset (\cplx \setminus \rea)\cup \{0\}.
\]
 Hence, we have
\[
\hull{F(K)}\cap \hull{F(K_0)}=\{0\}\; \text{and}\; F^{-1}\{0\}\cap K=\{0\}.
\]
We also have 
\[
 F^{-1}\{0\}\cap K_0= \{(x,y)\in K_0~:~x=\pm y\}.
\]
Hence, $F^{-1}\{0\}\cap(K\cup K_0)$ is polynomially convex.
Therefore, all the conditions of Lemma~\ref{L:kallin} are satisfied. Hence, $K\cup K_0$ is 
polynomially convex.
\end{proof}  
\smallskip
        
{\bf Acknowledgement.} I would like to thank Gautam Bharali for many useful comments on this paper.

\end{document}